\documentclass[11pt]{article}

\usepackage[nocompress]{cite}
\usepackage[dvips]{graphics,color}
\usepackage{amssymb}
\usepackage{amsfonts}
\usepackage{graphicx}
\usepackage{amsthm}
\usepackage{subfigure}
\usepackage{algorithm}
\usepackage{algorithmic}
\usepackage{geometry}
\usepackage{epsfig}

\textheight 24cm \textwidth 15.5cm \topmargin -1.7truecm \oddsidemargin
0.24in

\begin{document}

\title{On Inner Iterations in the Shift-Invert Residual Arnoldi Method and the
Jacobi--Davidson Method\footnote{Supported
by National Basic Research Program
of China 2011CB302400 and the
National Science Foundation of China (No. 11071140).}}

\author{Zhongxiao Jia\thanks{Department of Mathematical Sciences,
Tsinghua University, Beijing 100084, People's Republic of China,
{\sf jiazx@tsinghua.edu.cn}.} \and Cen Li\thanks{Department of Mathematical
Sciences, Tsinghua University, Beijing 100084, People's Republic of China,
{\sf licen07@mails.tsinghua.edu.cn}.}}
\date{}
\maketitle

\newtheorem{Def}{\bf Definition}
\newtheorem{Exm}{\bf Example}
\newtheorem{Thm}{\bf Theorem}
\newtheorem{Lem}{\bf Lemma}
\newtheorem{Cor}{\bf Corollary}
\newtheorem{Alg}{\bf Algorithm}
\newtheorem{Exp}{\bf Experiment}
\newtheorem{Ass}{\bf Assumption}
\newtheorem{Rem}{\bf Remark}
\newtheorem{Pro}{\bf Proposition}

\newcommand{\Span}{\mathrm{span}}
\newcommand{\xp}{\mathbf{x}_\perp}
\newcommand{\Xp}{X_\perp}
\newcommand{\sep}{\mathrm{sep}}
\newcommand{\imag}{\mathrm{i}}
\newcommand{\bfx}{\mathbf{x}}
\newcommand{\bfy}{\mathbf{y}}
\newcommand{\bft}{\mathbf{t}}
\newcommand{\bfu}{\mathbf{u}}
\newcommand{\bfr}{\mathbf{r}}
\newcommand{\bff}{\mathbf{f}}
\newcommand{\bfv}{\mathbf{v}}
\newcommand{\bfw}{\mathbf{w}}
\newcommand{\bfz}{\mathbf{z}}
\newcommand{\bfg}{\mathbf{g}}
\newcommand{\bfa}{\mathbf{a}}
\newcommand{\bfb}{\mathbf{b}}
\newcommand{\bfc}{\mathbf{c}}
\newcommand{\bfd}{\mathbf{d}}
\newcommand{\bfh}{\mathbf{h}}
\newcommand{\bfB}{\mathbf{B}}
\newcommand{\bfI}{\mathbf{I}}
\newcommand{\bfA}{\mathbf{A}}
\newcommand{\bfV}{\mathbf{V}}
\newcommand{\bfH}{\mathbf{H}}
\newcommand{\bfP}{\mathbf{P}}
\newcommand{\bfU}{\mathbf{U}}
\newcommand{\bfX}{\mathbf{X}}
\newcommand{\bfL}{\mathbf{L}}
\newcommand{\bfQ}{\mathbf{Q}}
\newcommand{\bfR}{\mathbf{R}}
\newcommand{\bfE}{\mathbf{E}}
\newcommand{\bfM}{\mathbf{M}}
\newcommand{\bfSig}{\mathbf{\Sigma}}

\begin{abstract}
Using a new analysis approach, we establish a general
convergence theory of the Shift-Invert Residual Arnoldi (SIRA)
method for computing a simple eigenvalue nearest to a given target $\sigma$ and
the associated eigenvector. In SIRA, a subspace expansion vector at each step
is obtained by solving a certain inner linear system. We prove that the
inexact SIRA method mimics the exact SIRA well, that is,
the former uses almost the same outer iterations to achieve
the convergence as the latter does if all the inner linear systems
are iteratively solved with {\em low} or {\em modest} accuracy during
outer iterations. Based on the theory, we design practical stopping criteria
for inner solves. Our analysis is on one step expansion of subspace and the approach
applies to the Jacobi--Davidson (JD) method with the fixed target $\sigma$ as well,
and a similar general convergence theory is obtained for it.
Numerical experiments confirm our theory and demonstrate that the inexact SIRA and JD
are similarly effective and are considerably superior to the inexact SIA.

\smallskip

{\bf Keywords.} Subspace expansion, expansion vector, inexact,
low or modest accuracy, the SIRA method, the JD method,
inner iteration, outer iteration.

\smallskip

{\bf AMS subject classifications.} 65F15, 15A18, 65F10.
\end{abstract}

\section{Introduction}
Consider the large and possibly sparse matrix eigenproblem
\begin{equation}
\bfA\bfx=\lambda\bfx, \label{problem}
\end{equation}
with $\bfA\in\mathcal{C}^{n\times n}$, the 2-norm $\|\bfx\|=1$
and the eigenvalues labeled as
\begin{eqnarray*}
0<|\lambda_1-\sigma|<|\lambda_2-\sigma|\leq\cdots\leq|\lambda_n-\sigma|
\end{eqnarray*}
for a given target $\sigma\in\mathcal{C}$. We are interested in the eigenvalue
$\lambda_1$ closest to the target $\sigma$ and/or the associated eigenvector
$\bfx_1$. We denote $(\lambda_1,\bfx_1)$ by $(\lambda,\bfx)$ for simplicity.
A number of numerical methods \cite{bai2000templates,parlett1998symmetric,
saad1992eigenvalue,vandervorst2002eigenvalue,stewart2001eigensystems} are
available for solving this kind of problems. The Residual Arnoldi (RA)
method and Shift-Invert Residual Arnoldi (SIRA) method are new ones that have
their origins in the Jacobi--Davidson (JD) method \cite{sleijpen2000jacobi}.
RA was initially proposed by van der Vorst and Stewart in 2001;
see \cite{leestewart07}. The methods
were then studied and developed by Lee \cite{lee2007residual} and Lee and Stewart
\cite{leestewart07}. We briefly describe RA now.

Given a starting vector $\bfv_1$ with $\|\bfv_1\|=1$, suppose an orthonormal
$\bfV_m=(\bfv_1,\ldots,\bfv_m)$ has been constructed by
the Arnoldi process. Then the columns of $\bfV_m$ form a
basis of the $m$-dimensional Krylov subspace $\mathcal{K}_m(\bfA,\bfv_1)
=span\{\bfv_1,\bfA\bfv_1,\ldots,\bfA^{m-1}\bfv_1\}$,
and the next basis vector $\bfv_{m+1}$ is obtained by orthogonalizing
$\bfA\bfv_m$ against $\bfV_m$. Let $(\tilde\lambda,\bfy)$ be the
candidate Ritz pair of $\bfA$ for a desired eigenpair of $\bfA$ with
respect to $\mathcal{K}_m(\bfA,\bfv_1)$, and define
the residual $\bfr=\bfA\bfy-\tilde\lambda\bfy$. Then
the RA method orthogonalizes $\bfr$ against $\bfV_m$ to get the
next basis vector, which, in exact arithmetic, is just $\bfv_{m+1}$
obtained by the Arnoldi process \cite{lee2007residual,leestewart07}. So
the Arnoldi method is mathematically equivalent to the RA method. However,
van der Vorst and Stewart discovered a striking phenomenon
that $\bfr$ in the RA method may allow much larger errors or perturbations
than $\bfA\bfv_m$ in the Arnoldi method.

The Shift-Invert Arnoldi (SIA) method is the Arnoldi method applied to
the shift-invert matrix $\bfB=(\bfA-\sigma\bfI)^{-1}$ and
finds a few eigenvalues nearest to $\sigma$ and the associated eigenvectors.
It computes $\bfv_{m+1}$ by orthogonalizing $\bfu=\bfB\bfv_m$ against
$\bfV_m$, whose columns now form a basis of $\mathcal{K}_m(\bfB,\bfv_1)$.
So at step $m$ one has to solve the linear system
\begin{equation}\label{siainner}
(\bfA-\sigma\bfI)\bfu=\bfv_m
\end{equation}
for $\bfu$. The SIRA method \cite{lee2007residual,leestewart07}
is an alternative of the RA method applied to $\bfB$.
At each step one has to solve the linear system
\begin{equation}
\label{inneriteration}(\bfA-\sigma \bfI)\bfu=\bfr
\end{equation}
for $\bfu$, where $\bfr=\bfA\bfy-\nu\bfy$ is the residual of the current
approximate eigenpair $(\nu,\bfy)$ obtained by SIRA. Then the SIRA method computes
the next basis vector $\bfv_{m+1}$ by orthogonalizing $\bfu$ against $\bfV_m$.
A mathematical difference between SIA and SIRA is that the SIA method computes
Ritz pairs of the shift-invert $\bfB$ with respect to $\mathcal{K}_m(\bfB,\bfv_1)$
and recovers an approximation to $(\lambda,\bfx)$, while the SIRA
method computes the Ritz pairs of the original $\bfA$ with respect to
the same $\mathcal{K}_m(\bfB,\bfv_1)$ and gets an approximation to $(\lambda,\bfx)$.
So SIA and SIRA generally obtain different approximations to $(\lambda,\bfx)$
with respect to the same subspace $\mathcal{K}_m(\bfB,\bfv_1)$.

However, for large (\ref{inneriteration}), only iterative solvers are generally
viable. This leads to the inexact SIRA, an inner-outer iterative method,
built-up by outer iteration as the eigensolver and inner iteration as the
solver of (\ref{inneriteration}). Inexact eigensolvers have attracted much
attention over the last two decades, and among them
inexact SIA type methods \cite{simoncini2003ia, simoncini2005ia,spence2009ia,xueelman10}
are closely related to the work in the current paper.
Central concerns on all inexact eigensolvers are how the accuracy of
inner iterations ensures and affects the convergence of outer iterations and how
to choose the accuracy requirements of inner iterations so that each inexact eigensolver
mimics its corresponding exact counterpart very well in the sense
that the two eigensolvers use almost the same or very comparable outer iterations
to achieve the convergence.

The JD method with fixed or variable targets \cite{sleijpen2000jacobi} is a
very popular inexact eigensolver, in which a correction equation (inner linear system)
is solved iteratively at each outer iteration; see, e.g.,
\cite{bai2000templates,vandervorst2002eigenvalue,stewart2001eigensystems}
and more recent \cite{hochstenbachnotay09,notay02,stathopoulos,voss07}.
Hitherto, however, there has been no result on the accuracy requirement of inner
iterations involved in the standard JD method. Existing work only
focuses on the simplified (or single-vector) JD method without subspace acceleration.
One hopes that the results on the accuracy requirement of inner
iterations developed for the simplified JD may help understand the standard JD.
Nevertheless, such treatment may be too inaccurate and far from the essence
of the standard JD. As is well known,
the standard JD is much more complicated than the simplified JD,
and the convergence of its outer iterations is much more involved; see
\cite{jia2001analysis} and also
\cite{bai2000templates,vandervorst2002eigenvalue,stewart2001eigensystems}
for details. Therefore, the standard JD method lacks a general theory on inner
iterations, and a rigorous and insightful analysis is necessary and very
appealing.

For the inexact SIA method, Simoncini \cite{simoncini2003ia} has established a
relaxation theory on the accuracy requirements of inner iterations of (\ref{siainner})
as $m$ increases. She proved that the accuracy of approximate solution of
(\ref{siainner}) should be very high initially
and is relaxed as the approximate eigenpairs start converging.
Freitag and Spence \cite{spence2009ia} have extended Simoncini's
relaxation theory to the inexact implicitly restarted Arnoldi method.
Xue and Elman~\cite{xueelman10} have made a refined analysis on the relaxation
strategy. 
So it may be very costly to implement the inexact SIA type methods.

For the SIRA method, it has been reported by Lee \cite{lee2007residual} and
Lee and Stewart \cite{leestewart07} that when the accuracy of approximate
solutions of (\ref{inneriteration}) is low or modest at each
step, the method may still work well.
Lee and Stewart \cite{leestewart07} have made some analysis on the RA and SIRA methods
but they did not derive any quantitative
and explicit bounds for the accuracy requirements of inner iterations.

In this paper, we take a different approach from that in
\cite{lee2007residual,leestewart07} to giving
a rigorous one-step analysis of the inexact SIRA method
and establish a general and
quantitative theory of the accuracy requirements of inner iterations.
Our analysis approach applies to the JD method with the fixed target $\sigma$
as well. We first show that the exact SIRA and JD methods are mathematically
equivalent. We then focus on a detailed
quantitative analysis of the inexact SIRA and JD methods.
Let $\varepsilon$ be the relative error of
the approximate solution of the inner linear system.
We prove that a modestly small $\varepsilon$, e.g.,
$\varepsilon \in [10^{-4},10^{-3}]$, is generally enough to make
the inexact SIRA and JD use almost the same outer iterations
as the exact ones to achieve the convergence.
As a result, one only needs to solve all inner linear systems
with low or modest accuracy in the inexact SIRA and the JD methods,
and both methods are expected to
be considerably more effective than the inexact SIA method.
We should point out that our work is locally an one step analysis.
A global analysis involving subspaces accumulating
all previous perturbations is much harder and seems impossible.
Actually, an one step local analysis is typical in the field of inexact eigensolvers,
and it indeed sheds lights on the behavior of the inexact solvers.

The paper is organized as follows. In Section~\ref{sec:sira_vs_jd},
we review the SIRA and JD methods and show the equivalence of two exact
versions. In Section~\ref{sec:eps}, we derive some
relationships between $\varepsilon$ and subspace expansions
and show that the inexact SIRA and methods
behave very similar when their respective inner linear systems are solved
with the same accuracy. In Section~\ref{sec:teps},
we consider subspace improvement and the selection of $\varepsilon$
and prove that the inexact SIRA mimics
the exact SIRA very well when $\varepsilon$ is modestly small at
all steps. In Section~\ref{issue},
we consider some practical issues and design practical stopping criteria for
inner solves in the inexact SIRA and JD.
In Section~\ref{numer}, we report numerical experiments to confirm
our theory and the considerable superiority of the
inexact SIRA and JD algorithms to the inexact SIA algorithm. Meanwhile, we
show that the inexact SIRA and JD are similarly effective.
Finally, we conclude the paper and point out future work
in Section~\ref{concl}.

Throughout the paper, denote by $\|\cdot\|$ the 2-norm of a vector or matrix,
by $\bfI$ the identity matrix with the order clear from the context,
by the superscript $H$ the conjugate
transpose of a vector or matrix, and by
$\kappa(\bfQ)=\|\bfQ\|\|\bfQ^{-1}\|$ the condition number of a nonsingular
matrix $\bfQ$. We measure the
distance between a nonzero vector $\bfy$ and a subspace $\mathcal{V}$
by
\begin{equation}
\sin\angle(\mathcal{V},\bfy)=\frac{\|(\bfI-\bfP_\bfV)\bfy\|}{\|\bfy\|}
=\frac{\|\bfV_\perp^H\bfy\|}{\|\bfy\|}, \label{sinedef}
\end{equation}
where $\bfP_\bfV$ is the orthogonal projector onto $\mathcal{V}$
and the columns of $\bfV_\perp$ form
an orthonormal basis of the orthogonal complement of $\mathcal{V}$.

\section{Equivalence of the exact SIRA and JD methods}
\label{sec:sira_vs_jd}

Algorithms~\ref{alg:sira}--\ref{alg:jd} describe
the SIRA algorithm and the JD algorithm with the fixed target $\sigma$,
respectively (for brevity we drop iteration subscript).
Comparing them, we observe that the only
seemingly differences between them are the linear systems to be solved (step 4)
and the expansion vectors to be orthogonalized against the
initial subspace $\mathcal{V}$. In fact, they are equivalent, as the following
theorem shows.

\begin{algorithm}\caption{SIRA method with the target $\sigma$}\label{alg:sira}
\begin{algorithmic}
\STATE{Given the target $\sigma$ and a user-prescribed convergence tolerance
$tol$, suppose the columns of $\bfV$ form an orthonormal basis of an initial subspace
$\mathcal{V}$.}
\REPEAT
\STATE{1. Compute the Rayleigh quotient $\bfH=\bfV^H\bfA\bfV$.}
\STATE{2. Let $(\nu,\bfz)$ be an eigenpair of $\bfH$, where $\nu\cong\lambda$.}
\STATE{3. Compute the residual $\bfr_S=\bfA\bfy-\nu\bfy$,
where $(\nu,\bfy)=(\nu,\bfV\bfz)$.}
\STATE{4. Solve the linear system
\begin{equation}\label{lssira}
(\bfA-\sigma \bfI)\bfu=\bfr_S.
\end{equation}}
\STATE{5. Orthonormalize $\bfu$ against $\bfV$ to get $\bfv$.}
\STATE{6. Expand the subspace as $\bfV=\left[\begin{array}{cc}
\bfV & \bfv\end{array}\right]$ and update $\bfH$.}
\UNTIL{$\|\bfr_S\|<tol$.}
\end{algorithmic}
\end{algorithm}
\begin{algorithm}\caption{Jacobi--Davidson method with the
fixed target $\sigma$}\label{alg:jd}
\begin{algorithmic}
\STATE{Given the target $\sigma$ and a user-prescribed convergence tolerance $tol$,
suppose the columns of $\bfV$ form an orthonormal basis of an initial subspace
$\mathcal{V}$.}
\REPEAT
\STATE{1. Compute the Rayleigh quotient $\bfH=\bfV^H\bfA\bfV$.}
\STATE{2. Let $(\nu,z)$ be an eigenpair of $\bfH$, where $\nu\cong\lambda$.}
\STATE{3. Compute the residual $\bfr_J=\bfA\bfy-\nu \bfy$, where
$(\nu,\bfy)=(\nu,\bfV\bfz)$.}
\STATE{4. Solve the correction equation for $\bfu\perp \bfy$,
\begin{equation}
\label{ls_jd}(\bfI-\bfy\bfy^H)(\bfA-\sigma \bfI)(\bfI-\bfy\bfy^H)\bfu=-\bfr_J.
\end{equation}}
\STATE{5. Orthonormalize $\bfu$ against $\bfV$ to get $\bfv$.}
\STATE{6.  Expand the subspace as $\bfV=\left[\begin{array}{cc}
\bfV & \bfv\end{array}\right]$ and update $\bfH$.}
\UNTIL{$\|\bfr_S\|<tol$.}
\end{algorithmic}
\end{algorithm}
\begin{Thm}\label{equiva}
For the same initial $\mathcal{V}$, if $\sigma\not=\nu$, then
the SIRA method and the JD method are mathematically
equivalent when inner linear systems
{\rm (\ref{lssira})} and {\rm (\ref{ls_jd})} are solved exactly.
\end{Thm}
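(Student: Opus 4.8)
The plan is to show that, starting from the same subspace $\mathcal{V}$, the two algorithms expand it by the same one-dimensional subspace, so that they generate an identical sequence of subspaces and hence identical Ritz approximations. Since steps 1--3 of Algorithms~\ref{alg:sira} and~\ref{alg:jd} are literally the same, both methods produce the same Ritz pair $(\nu,\bfy)$ with $\bfy=\bfV\bfz\in\mathcal{V}$ and the same residual $\bfr=\bfA\bfy-\nu\bfy$ (so $\bfr_S=\bfr_J$). The equivalence therefore reduces to comparing the expanded subspaces $\Span\{\mathcal{V},\bfu\}$ produced in step 5, and the key observation is that any component of the expansion vector lying in $\mathcal{V}$ (in particular any multiple of $\bfy$) is irrelevant after the orthogonalization against $\bfV$.

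First I would solve the SIRA system (\ref{lssira}) explicitly. Writing $\bfA-\nu\bfI=(\bfA-\sigma\bfI)+(\sigma-\nu)\bfI$ gives $\bfu_S=(\bfA-\sigma\bfI)^{-1}\bfr=\bfy+(\sigma-\nu)(\bfA-\sigma\bfI)^{-1}\bfy$. Since $\bfy\in\mathcal{V}$ and $\sigma\neq\nu$, orthogonalizing $\bfu_S$ against $\bfV$ annihilates the $\bfy$ term and leaves a nonzero multiple of the projection of $(\bfA-\sigma\bfI)^{-1}\bfy$ onto the orthogonal complement of $\mathcal{V}$; hence $\Span\{\mathcal{V},\bfu_S\}=\Span\{\mathcal{V},(\bfA-\sigma\bfI)^{-1}\bfy\}$.

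Next I would treat the JD correction equation (\ref{ls_jd}). Because the solution satisfies $\bfu\perp\bfy$, the right factor $(\bfI-\bfy\bfy^H)$ acts as the identity on $\bfu_J$, so (\ref{ls_jd}) becomes $(\bfI-\bfy\bfy^H)(\bfA-\sigma\bfI)\bfu_J=-\bfr$. This says $(\bfA-\sigma\bfI)\bfu_J=-\bfr+\beta\bfy$ for the scalar $\beta=\bfy^H(\bfA-\sigma\bfI)\bfu_J$, and inverting yields $\bfu_J=-\bfy+[\beta-(\sigma-\nu)](\bfA-\sigma\bfI)^{-1}\bfy$. The coefficient $\beta-(\sigma-\nu)$ cannot vanish: if it did we would have $\bfu_J=-\bfy$, contradicting $\bfu_J\perp\bfy$ (which forces $\bfy^H\bfu_J=0$, whereas $\bfu_J=-\bfy$ gives $-\|\bfy\|^2\neq0$). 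Thus, exactly as for SIRA, orthogonalizing $\bfu_J$ against $\bfV$ removes the $-\bfy$ term and produces a nonzero multiple of the same projected vector, so $\Span\{\mathcal{V},\bfu_J\}=\Span\{\mathcal{V},(\bfA-\sigma\bfI)^{-1}\bfy\}=\Span\{\mathcal{V},\bfu_S\}$.

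Finally, an induction on the outer iteration count closes the argument: since the two methods start from the same $\mathcal{V}$ and, at every step, expand identically, they build the same subspaces, compute the same $\bfH=\bfV^H\bfA\bfV$, and return the same eigenpair approximations at every iteration. I expect the only delicate point to be the bookkeeping in the JD case---recognizing that the unknown scalar $\beta$ only rescales the $(\bfA-\sigma\bfI)^{-1}\bfy$ contribution and is pinned away from the degenerate value by the constraint $\bfu_J\perp\bfy$---rather than any substantial computation.
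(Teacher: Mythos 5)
Your proposal is correct and follows essentially the same route as the paper's proof: both solve the two inner systems explicitly in terms of $\bfB\bfy=(\bfA-\sigma\bfI)^{-1}\bfy$, observe that the components along $\bfy\in\mathcal{V}$ are annihilated by the orthogonalization against $\bfV$, and conclude that both methods expand $\mathcal{V}$ by the same direction $(\bfI-\bfP_\bfV)\bfB\bfy$. Your argument that the JD scalar $\beta-(\sigma-\nu)$ is nonzero via the constraint $\bfu_J\perp\bfy$ is equivalent to the paper's explicit evaluation $\gamma=1/(\bfy^H\bfB\bfy)$, so there is no substantive difference.
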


\begin{proof}
For the same initial ${\cal V}$, the two
methods share the same $\bfH$, $\nu$ and $\bfy$, leading to the same
$\bfr_S$ and $\bfr_J$. Let $\bfu_S$ and $\bfu_J$ be the exact solutions of
(\ref{lssira}) and (\ref{ls_jd}), respectively. Since $\bfB=(\bfA-\sigma\bfI)^{-1}$,
we get
\begin{eqnarray}
\label{u_sira}\bfu_S=\bfB\bfr_S=(\sigma-\nu)\bfB\bfy+\bfy.
\end{eqnarray}
From (\ref{ls_jd}), we have
\begin{equation}
\label{ls_jd_0}(\bfA-\sigma\bfI)\bfu_J=\left(\bfy^H(\bfA-\sigma\bfI)
\bfu_J\right)\bfy-\bfr_J=\gamma\bfy-(\bfA-\sigma\bfI)\bfy,
\end{equation}
where $\gamma=\bfy^H(\bfA-\sigma \bfI)\bfu_J-\sigma+\nu$.
Premultiplying two sides of (\ref{ls_jd_0}) by $\bfB $, we obtain
\begin{equation}
\label{u_jd}\bfu_J=\gamma\bfB \bfy-\bfy.
\end{equation}
Since $\bfu_J\perp \bfy$, we get $\gamma=\frac{1}{\bfy^H\bfB \bfy}$.
Since $\bfy\in\mathcal{V}$, we have $(\bfI-\bfP_\bfV)\bfy=\bf0$.
So from (\ref{u_sira}) and (\ref{u_jd}), we get
\begin{equation}
(\bfI-\bfP_\bfV)\bfB \bfy=\frac{1}{\sigma-\nu}
(\bfI-\bfP_\bfV)\bfu_S=\frac{1}{\gamma}(\bfI-\bfP_\bfV)\bfu_J.
\label{expand}
\end{equation}
Note that $(\bfI-\bfP_\bfV)\bfu_S$ and $(\bfI-\bfP_\bfV)\bfu_J$
(after normalization) are the subspace expansion vectors in SIRA
and JD, respectively. The two methods generate the same subspace
in the next iteration and $(\nu,\bfy)$ obtained by them are thus identical.
\end{proof}

From (\ref{ls_jd_0}), define
$$
\label{rj'}\bfr_J'=\bfA\bfy-(\sigma+\gamma)\bfy,
$$
where
$$
\gamma=\bfy^H(\bfA-\sigma \bfI)\bfu_J-\sigma+\nu=\frac{1}{\bfy^H\bfB \bfy}.
\label{gamma}
$$
Then (\ref{ls_jd_0}) and thus (\ref{ls_jd}) become
\begin{equation}
\label{ls_jd_1}(\bfA-\sigma \bfI)\bfu=\bfr_J', \label{mjd}
\end{equation}
whose solution is $-\bfu_J$ and is the same as $\bfu_J$ up to the sign $-1$.
So mathematically, hereafter we use (\ref{mjd}) as the inner linear system in
the JD method. Since $\bfy^H\bfB \bfy$ approximates the eigenvalue
$\frac{1}{\lambda-\sigma}$ of $\bfB $, $\gamma+\sigma=\frac{1}
{\bfy^H\bfB \bfy}+\sigma$ approximates $\lambda$. So $\bfr_J'$
is a residual associated with the desired eigenpair $(\lambda,\bfx)$, just
like $\bfr_S$ in (\ref{lssira}).

\section{Relationships between the accuracy of inner iterations and subspace
expansions}\label{sec:eps}

We observe that (\ref{lssira})
and (\ref{ls_jd_1}) fall into the category of
\begin{equation}
\label{ls_unified}(\bfA-\sigma \bfI)\bfu=\alpha_1\bfy+\alpha_2
(\bfA-\sigma \bfI)\bfy,\label{alpha12}
\end{equation}
where specifically $\alpha_1=\sigma-\nu$
and $\alpha_2=1$ in SIRA and $\alpha_1=-\frac{1}{\bfy^H\bfB \bfy}$
and $\alpha_2=1$ in JD.
The exact solution $\bfu$ of (\ref{alpha12}) is
\begin{equation}
\label{u_unified}\bfu=\alpha_1\bfB \bfy+\alpha_2 \bfy.
\end{equation}
Since $(\bfI-\bfP_{\bfV})\bfy=\bf0$, the (unnormalized) subspace expansion vector is
$(\bfI-\bfP_\bfV)\bfu=\alpha_1(\bfI-\bfP_\bfV)\bfB \bfy$.
Let $\tilde{\bfu}$ be an approximate solution of
(\ref{ls_unified}), whose relative error is defined by
\begin{equation}
\varepsilon=\frac{\|\tilde{\bfu}-\bfu\|}{\|\bfu\|}.
\label{errorsol}
\end{equation}
Then we can write
$$
\tilde{\bfu}=\bfu+\varepsilon\|\bfu\|\bff
\label{errorf}
$$
with $\bff$ the normalized error direction vector.
So we get
\begin{equation}
\label{IPtu}(\bfI-\bfP_\bfV)\tilde{\bfu}=(\bfI-\bfP_\bfV)
\bfu+\varepsilon\|\bfu\|\bff_\perp.
\end{equation}
where
\begin{equation}
\label{fperp} \bff_\perp=(\bfI-\bfP_\bfV)\bff.
\end{equation}
Define
\begin{eqnarray}
\label{tv_and_v}\tilde{\bfv}=\frac{(\bfI-\bfP_\bfV)\tilde{\bfu}}
{\|(\bfI-\bfP_\bfV)\tilde{\bfu}\|},\ \ \ \bfv=\frac{(\bfI-\bfP_\bfV)\bfu}
{\|(\bfI-\bfP_\bfV)\bfu\|},
\end{eqnarray}
which are the normalized subspace expansion vectors in the inexact and exact methods,
respectively. We measure the difference between $(\bfI-\bfP_\bfV)\tilde{\bfu}$
and
$(\bfI-\bfP_\bfV)\bfu$ by the relative error
\begin{equation}
\label{varepsilon}\tilde{\varepsilon}=\frac{\|(\bfI-\bfP_\bfV)
\tilde{\bfu}-(\bfI-\bfP_\bfV)\bfu\|}{\|(\bfI-\bfP_\bfV)\bfu\|}
\end{equation}
or by $\sin\angle(\tilde{\bfv},\bfv)$. Two quantities
$\tilde{\varepsilon}$ and $\sin\angle(\tilde{\bfv},\bfv)$
are two valid measures for the difference.
Next we establish a relationship between $\tilde\varepsilon$ and
$\sin\angle(\tilde\bfv,\bfv)$, which will be used
in proving our final result in this paper.

\begin{Lem}\label{lemma1}
With the notations defined above, it holds that
\begin{equation}
\label{sin_tv_v_final}\sin\angle(\tilde{\bfv},\bfv)
=\tilde{\varepsilon}\sin\angle(\tilde{\bfv},\bff_\perp).
\end{equation}
\end{Lem}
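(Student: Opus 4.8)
The plan is to reduce the whole statement to the single perturbation identity (\ref{IPtu}) and then to recognize (\ref{sin_tv_v_final}) as the invariance of a two-dimensional area under a shearing. To set up, write $\bfa=(\bfI-\bfP_\bfV)\bfu$ and $\tilde\bfa=(\bfI-\bfP_\bfV)\tilde\bfu$, so that (\ref{IPtu}) reads $\tilde\bfa=\bfa+\bfd$ with $\bfd=\varepsilon\|\bfu\|\bff_\perp$. Since $\bfd$ is a nonnegative scalar multiple of $\bff_\perp$, the two span the same line, and from (\ref{varepsilon}) we have $\tilde\varepsilon=\|\bfd\|/\|\bfa\|$. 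Because $\bfv$ and $\tilde\bfv$ are merely the normalizations of $\bfa$ and $\tilde\bfa$, both sines in (\ref{sin_tv_v_final}) are angles between one-dimensional subspaces, which I will write using the orthogonal projector $\bfP_\bfw=\bfw\bfw^H/(\bfw^H\bfw)$ onto the line through a vector $\bfw$.

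First I would express each side through a line projector, in each case choosing the more convenient vector to span the subspace so as to exploit $\tilde\bfa=\bfa+\bfd$. Taking the line through $\bfa$ as the subspace and using $(\bfI-\bfP_\bfa)\bfa=\bf0$,
\[
\sin\angle(\tilde\bfv,\bfv)=\frac{\|(\bfI-\bfP_\bfa)\tilde\bfa\|}{\|\tilde\bfa\|}
=\frac{\|(\bfI-\bfP_\bfa)\bfd\|}{\|\tilde\bfa\|}.
\]
Taking instead the line through $\tilde\bfa$ as the subspace and recalling $\bff_\perp\parallel\bfd$,
\[
\sin\angle(\tilde\bfv,\bff_\perp)=\sin\angle(\tilde\bfa,\bfd)
=\frac{\|(\bfI-\bfP_{\tilde\bfa})\bfd\|}{\|\bfd\|}.
\]
Multiplying the second display by $\tilde\varepsilon=\|\bfd\|/\|\bfa\|$, the desired identity (\ref{sin_tv_v_final}) becomes equivalent to the single scalar equality
\[
\|\bfa\|\,\|(\bfI-\bfP_\bfa)\bfd\|=\|\tilde\bfa\|\,\|(\bfI-\bfP_{\tilde\bfa})\bfd\|.
\]

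The heart of the proof is this last equality, and I expect it to be the only real obstacle. Each side is the area of the parallelogram spanned by a pair of vectors, namely $\{\bfa,\bfd\}$ on the left and $\{\tilde\bfa,\bfd\}=\{\bfa+\bfd,\bfd\}$ on the right. Conceptually these areas agree because replacing $\bfa$ by $\bfa+\bfd$ is a shearing parallel to $\bfd$, which preserves area. Since the vectors here are complex, I would not argue with planar ``law of sines'' intuition but instead make this rigorous algebraically: the squared area of a pair is its $2\times2$ Gram determinant, the two pairs differ by right multiplication of the basis by a unit-determinant lower-triangular matrix, and the Gram determinant transforms by the squared modulus of that determinant, which is $1$. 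Equivalently, one may simply square both sides and substitute $\tilde\bfa=\bfa+\bfd$, whereupon the cross terms cancel and both reduce to $\|\bfa\|^2\|\bfd\|^2-|\bfa^H\bfd|^2$. Substituting the resulting equality back into the two sine formulas gives (\ref{sin_tv_v_final}). Finally I would dispose of the degenerate cases $\bfd=\bf0$ and $\bfa\parallel\bfd$, in which both sides of (\ref{sin_tv_v_final}) vanish and the identity holds trivially.
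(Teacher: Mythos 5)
Your proof is correct, but it takes a detour that the paper's own proof avoids. Both arguments start from the same place: the perturbation identity (\ref{IPtu}), i.e. $\tilde\bfa=\bfa+\bfd$ with $\bfa=(\bfI-\bfP_\bfV)\bfu$, $\tilde\bfa=(\bfI-\bfP_\bfV)\tilde\bfu$, $\bfd=\varepsilon\|\bfu\|\bff_\perp$, and $\tilde\varepsilon=\|\bfd\|/\|\bfa\|$. The difference lies in which line's orthogonal complement each sine is computed against. The paper expresses $\sin\angle(\tilde\bfv,\bfv)$ via an orthonormal basis $\bfU_\perp$ of the complement of $\mathrm{span}\{\tilde\bfa\}$ --- the \emph{same} subspace that defines $\sin\angle(\tilde\bfv,\bff_\perp)$ --- so that $\bfU_\perp^H\bfa=\bfU_\perp^H(\tilde\bfa-\bfd)=-\bfU_\perp^H\bfd$ and the identity drops out in one line: $\sin\angle(\tilde\bfv,\bfv)=\|\bfU_\perp^H\bfd\|/\|\bfa\|=(\|\bfd\|/\|\bfa\|)\cdot\|\bfU_\perp^H\bfd\|/\|\bfd\|=\tilde\varepsilon\sin\angle(\tilde\bfv,\bff_\perp)$. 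You instead project onto the complement of $\mathrm{span}\{\bfa\}$ for the first sine and onto the complement of $\mathrm{span}\{\tilde\bfa\}$ for the second, which forces you to prove the bridging identity $\|\bfa\|\,\|(\bfI-\bfP_\bfa)\bfd\|=\|\tilde\bfa\|\,\|(\bfI-\bfP_{\tilde\bfa})\bfd\|$ --- your shear-invariance step. That step is correct (both sides square to $\|\bfa\|^2\|\bfd\|^2-|\bfa^H\bfd|^2$), your insistence on Gram determinants rather than planar law-of-sines intuition is the right instinct for complex vectors, and your implicit appeal to the symmetry of the angle between two lines is legitimate. What the paper's choice buys is that the ``only real obstacle'' you identify disappears entirely; what your version buys is a geometric reading (the area of the parallelogram spanned by $\bfa$ and $\bfd$ is invariant under the shear $\bfa\mapsto\bfa+\bfd$) that makes the identity look less like an algebraic accident. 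Had you used the complement of $\mathrm{span}\{\tilde\bfa\}$ for both sines, your argument would collapse to the paper's.
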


\begin{proof}
Let $\bfU_\perp$ be an orthonormal basis of the orthogonal complement of
$span\left\{(\bfI-\bfP_\bfV)\tilde{\bfu}\right\}$ with respect to ${\cal C}^n$.
Since $\bfU_\perp^H(\bfI-\bfP_\bfV)\tilde{\bfu}=\mathbf{0}$, by definition (\ref{sinedef})
we get
\begin{eqnarray}
\sin\angle(\tilde{\bfv},\bfv)
\nonumber&=&\sin\angle\left((\bfI-\bfP_\bfV)\tilde{\bfu},(\bfI-\bfP_\bfV)
\bfu\right)\\
\nonumber&=&\frac{\left\|\bfU_\perp^H(\bfI-\bfP_\bfV)\bfu\right\|}
{\|(\bfI-\bfP_\bfV)\bfu\|}\\
\nonumber&=&\frac{\left\|\bfU_\perp^H(\bfI-\bfP_\bfV)\tilde{\bfu}-\bfU_\perp^H
(\bfI-\bfP_\bfV)\bfu\right\|}{\|(\bfI-\bfP_\bfV)\bfu\|}\\
\label{sin_tv_v}&=&\frac{\left\|\bfU_\perp^H\left((\bfI-\bfP_\bfV)
\tilde{\bfu}-(\bfI-\bfP_\bfV)\bfu\right)\right\|}{\|(\bfI-\bfP_\bfV)\bfu\|}.
\end{eqnarray}
From (\ref{IPtu}) we have $(\bfI-\bfP_\bfV)\tilde{\bfu}-(\bfI-\bfP_\bfV)
\bfu=\varepsilon\|\bfu\|\bff_\perp$. Substituting it into (\ref{sin_tv_v}) gives
$$
\sin\angle(\tilde{\bfv},\bfv)
=\tilde{\varepsilon}\sin\angle(\tilde{\bfv},\bff_\perp).
$$
\end{proof}


In order to make the inexact SIRA method mimic the SIRA method well,
we must require that $\tilde{\bfv}$ approximates $\bfv$
with certain accuracy, i.e., $\tilde\varepsilon$ suitably small,
so that the two expanded subspaces have comparable quality. We will come back
to this key point and estimate $\tilde\varepsilon$ quantitatively
in Section \ref{sec:teps}.

In what follows we establish an important relationship between $\varepsilon$ and
$\tilde{\varepsilon}$, and based on it we analyze how
$\varepsilon$ varies with $\alpha_1$ and $\alpha_2$ for a given $\tilde\varepsilon$.

\begin{Thm}\label{thm4}
Let $\bfy$ be the current approximate eigenvector
and $\alpha=-\frac{\alpha_2}{\alpha_1}$ with $\alpha_1,\alpha_2$
in {\rm (\ref{alpha12})}. We have
\begin{equation}
\label{eps_teps_relation_general}\varepsilon \leq \frac{2\|\bfB \|
\sin\angle(\bfy,\bfx)}{\left\|\bfB \bfy-\alpha \bfy\right\|\sin\angle(\mathcal{V},\bff)}
\tilde{\varepsilon}.
\end{equation}
\end{Thm}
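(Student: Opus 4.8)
The plan is to turn the definitions of $\tilde\varepsilon$ and $\varepsilon$ into an exact identity and then estimate the single geometric factor that survives. First I would read off from (\ref{IPtu}) that the numerator in the definition (\ref{varepsilon}) of $\tilde\varepsilon$ equals $\varepsilon\|\bfu\|\bff_\perp$; taking norms and using $\|\bff_\perp\|=\|(\bfI-\bfP_\bfV)\bff\|=\sin\angle(\mathcal{V},\bff)$ (since $\|\bff\|=1$) gives
\[
\tilde\varepsilon=\frac{\varepsilon\|\bfu\|\sin\angle(\mathcal{V},\bff)}{\|(\bfI-\bfP_\bfV)\bfu\|},
\]
which I would solve for $\varepsilon$.

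Next I would insert the closed forms already available. With $\alpha=-\alpha_2/\alpha_1$, the exact solution (\ref{u_unified}) factors as $\bfu=\alpha_1(\bfB\bfy-\alpha\bfy)$, while $(\bfI-\bfP_\bfV)\bfu=\alpha_1(\bfI-\bfP_\bfV)\bfB\bfy$ because $(\bfI-\bfP_\bfV)\bfy=\mathbf{0}$. The common factor $|\alpha_1|$ cancels, yielding the exact expression
\[
\varepsilon=\frac{\|(\bfI-\bfP_\bfV)\bfB\bfy\|}{\|\bfB\bfy-\alpha\bfy\|\,\sin\angle(\mathcal{V},\bff)}\,\tilde\varepsilon .
\]
Comparing this with the claimed bound, it remains only to prove $\|(\bfI-\bfP_\bfV)\bfB\bfy\|\le 2\|\bfB\|\sin\angle(\bfy,\bfx)$.

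This last estimate is the one genuinely substantive step, and I expect it to be the main obstacle. The crucial idea is that $\bfy\in\mathcal{V}$, so $(\bfI-\bfP_\bfV)\bfy=\mathbf{0}$ and I am free to subtract any scalar multiple of $\bfy$ from $\bfB\bfy$: in particular $(\bfI-\bfP_\bfV)\bfB\bfy=(\bfI-\bfP_\bfV)\bigl(\bfB\bfy-\frac{1}{\lambda-\sigma}\bfy\bigr)$, where $\frac{1}{\lambda-\sigma}$ is precisely the eigenvalue of $\bfB$ attached to $\bfx$ (because $\bfB\bfx=\frac{1}{\lambda-\sigma}\bfx$). Since $\|\bfI-\bfP_\bfV\|=1$, the task reduces to bounding $\|\bfB\bfy-\frac{1}{\lambda-\sigma}\bfy\|$. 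I would then write $\bfy=c\bfx+s\xp$ with $\xp\perp\bfx$ a unit vector, $|c|=\cos\angle(\bfy,\bfx)$ and $s=\sin\angle(\bfy,\bfx)$; the $\bfx$-components cancel exactly and leave $s\bigl(\bfB-\frac{1}{\lambda-\sigma}\bfI\bigr)\xp$. A triangle inequality, together with the observation that $\frac{1}{|\lambda-\sigma|}$ is an eigenvalue of $\bfB$ and hence bounded by $\|\bfB\|$, produces the factor $2\|\bfB\|$ and completes the argument. The subtlety to watch is that the cancellation of the $\bfx$-terms depends on choosing exactly this multiplier, and that the final bound uses $\|\bfB\|\ge 1/|\lambda-\sigma|$ rather than equality, which holds for non-normal $\bfA$ as well.
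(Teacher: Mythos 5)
Your proposal is correct, and the backbone is the same as the paper's: the exact identity $\varepsilon=\frac{\|(\bfI-\bfP_\bfV)\bfu\|}{\|\bfu\|\sin\angle(\mathcal{V},\bff)}\tilde{\varepsilon}$, the cancellation of $\alpha_1$ after substituting $\bfu=\alpha_1(\bfB\bfy-\alpha\bfy)$, and the reduction to $\|(\bfI-\bfP_\bfV)\bfB\bfy\|\leq 2\|\bfB\|\sin\angle(\bfy,\bfx)$. The one place you diverge is in proving that last inequality: you exploit $(\bfI-\bfP_\bfV)\bfy=\mathbf{0}$ a second time to replace $\bfB\bfy$ by $\bfB\bfy-\frac{1}{\lambda-\sigma}\bfy$ inside the projector, so the $\bfx$-component of $\bfy$ cancels exactly and the triangle inequality on $\sin\angle(\bfy,\bfx)\,(\bfB-\frac{1}{\lambda-\sigma}\bfI)\bfg$ gives the factor $2\|\bfB\|$; the paper instead pushes the projector through the decomposition $\bfy=\cos\angle(\bfy,\bfx)\bfx+\sin\angle(\bfy,\bfx)\bfg$, keeps the term $\frac{\cos\angle(\bfy,\bfx)}{\lambda-\sigma}(\bfI-\bfP_\bfV)\bfx$, and bounds it via $\|(\bfI-\bfP_\bfV)\bfx\|=\sin\angle(\mathcal{V},\bfx)\leq\sin\angle(\bfy,\bfx)$ (valid because $\bfy\in\mathcal{V}$). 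Both mechanisms land on the same constant $2\|\bfB\|$, and yours is arguably slightly cleaner since it needs only $\|\bfI-\bfP_\bfV\|\le 1$ and the eigenvalue bound $\frac{1}{|\lambda-\sigma|}\leq\|\bfB\|$. One cosmetic caution: you reuse the symbol $\xp$ for a unit vector orthogonal to $\bfx$, whereas the paper reserves $\xp=(\bfI-\bfP_\bfV)\bfx$ (and calls your vector $\bfg$); since the paper's $\xp$ appears in the surrounding development, you should rename yours to avoid a clash.
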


\begin{proof}
By definition (\ref{fperp}), we have
$$
\|\bff_{\perp}\|=\|(\bfI-\bfP_\bfV)\bff\|=\sin\angle(\mathcal{V},\bff).
$$
From (\ref{IPtu}), we get
\begin{eqnarray*}
\varepsilon
&=&\frac{\|(\bfI-\bfP_\bfV)
\tilde{\bfu}-(\bfI-\bfP_\bfV)\bfu\|}{\|\bfu\|\|\bff_{\perp}\|}\\
&=&\frac{\|(\bfI-\bfP_\bfV)\bfu\|}{\|\bfu\|\|\bff_{\perp}\|}\frac{\|(\bfI-\bfP_\bfV)
\tilde{\bfu}-(\bfI-\bfP_\bfV)\bfu\|}{\|(\bfI-\bfP_\bfV)\bfu\|}
\\
&=&\frac{\|(\bfI-\bfP_\bfV)\bfu\|}{\|\bfu\|\|\bff_\perp\|}\tilde{\varepsilon}
=\frac{\|(\bfI-\bfP_\bfV)\bfu\|}{\|\bfu\|\sin\angle(\mathcal{V},\bff)}
\tilde{\varepsilon}.
\end{eqnarray*}
By (\ref{u_unified}), we substitute $\bfu=\alpha_1\bfB\bfy+\alpha_2\bfy$  into
the above, giving
\begin{eqnarray}
\varepsilon
&=&\frac{\|(\bfI-\bfP_\bfV)(\alpha_1\bfB\bfy+\alpha_2\bfy)\|}
{\|\alpha_1\bfB \bfy+\alpha_2\bfy\|\sin\angle(\mathcal{V},\bff)}
\tilde{\varepsilon} \nonumber\\
&=&\frac{\|\alpha_1(\bfI-\bfP_\bfV)\bfB\bfy\|}{\|\alpha_1\bfB
\bfy+\alpha_2\bfy\|\sin\angle(\mathcal{V},\bff)}\tilde{\varepsilon}  \nonumber\\
&=&\frac{\|(\bfI-\bfP_\bfV)\bfB \bfy\|}{\left\|\bfB \bfy+
\frac{\alpha_2}{\alpha_1}\bfy\right\|\sin\angle(\mathcal{V},\bff)}
\tilde{\varepsilon}. \label{eps_teps_relation}
\end{eqnarray}
Decompose $\bfy$ into the orthogonal direct sum
\begin{equation}
\label{decompose_y}\bfy=\cos\angle(\bfy,\bfx)\bfx+\sin\angle(\bfy,\bfx)\bfg
\end{equation}
with $\bfg\perp \bfx$ and $\|\bfg\|=1$. Then we get
\begin{eqnarray*}
(\bfI-\bfP_\bfV)\bfB \bfy
&=&(\bfI-\bfP_\bfV)\left(\cos\angle(\bfy,\bfx)\bfB \bfx+\sin\angle(\bfy,\bfx)\bfB
\bfg\right)\\
&=&(\bfI-\bfP_\bfV)\left(\frac{\cos\angle(\bfy,\bfx)}{\lambda-\sigma}\bfx+
\sin\angle(\bfy,\bfx)\bfB \bfg\right)\\
&=&\frac{\cos\angle(\bfy,\bfx)}{\lambda-\sigma}\xp+\sin\angle(\bfy,\bfx)
(\bfI-\bfP_\bfV)\bfB \bfg,
\end{eqnarray*}
where $\xp=(\bfI-\bfP_\bfV)\bfx$. Making use of $\|\xp\|=\sin\angle(\mathcal{V},\bfx)
\leq\sin\angle(\bfy,\bfx)$ and $\frac{1}{|\lambda-\sigma|}\leq\|\bfB\|$, we obtain
\begin{eqnarray}
\|(\bfI-\bfP_\bfV)\bfB \bfy\|
&=&\left\|\frac{\cos\angle(\bfy,\bfx)}{\lambda-\sigma}\xp+\sin\angle(\bfy,\bfx)
(\bfI-\bfP_\bfV)\bfB \bfg\right\|\nonumber\\
&\leq&\frac{|\cos\angle(\bfy,\bfx)|}{|\lambda-\sigma|}\|\xp\|+\|(\bfI-\bfP_\bfV)\bfB
\bfg\|\sin\angle(\bfy,\bfx)\nonumber\\
&\leq&\left(\frac{|\cos\angle(\bfy,\bfx)|}{|\lambda-\sigma|}+
\|(\bfI-\bfP_\bfV)\bfB\bfg\|\right)\sin\angle(\bfy,\bfx)\nonumber\\
&\leq&\left(\frac{1}{|\lambda-\sigma|}+\|\bfB \|\right)\sin\angle(\bfy,\bfx)
\nonumber\\
&\leq&2\|\bfB\|\sin\angle(\bfy,\bfx). \label{Bybound}
\end{eqnarray}
Therefore, combining the last relation with (\ref{eps_teps_relation}) establishes
(\ref{eps_teps_relation_general}).
\end{proof}

Observe that the linear system $(\bfA-\sigma\bfI)\bfu=\bfy$,
which is also the one in the inverse power method at each step,
falls into the form of (\ref{ls_unified}) by taking $\alpha_1=1$
and $\alpha_2=0$. For this case, from (\ref{eps_teps_relation_general})
we have
\begin{equation}
\label{eps_teps_relation_original}\varepsilon\leq\frac{2\|\bfB \|
\sin\angle(\bfy,\bfx)}{\left\|\bfB\bfy\right\|\sin\angle(\mathcal{V},\bff)}
\tilde{\varepsilon}.
\end{equation}
We comment that (i) $\sin\angle(\mathcal{V},\bff)$ is moderate
as $\bff$ is a general vector and (ii) $\|\bfB\|/\|\bfB\bfy\|=O(1)$
if $\bfy$ is a reasonably good approximation to $\bfx$
and in the worst case $\|\bfB\|/\|\bfB\bfy\|\leq\kappa(\bfB)$. In case that
$\sin\angle(\mathcal{V},\bff)$ is small, $\varepsilon$ becomes big for
a fixed small $\tilde{\varepsilon}$, that is, linear system (\ref{ls_unified})
is allowed to be solved with less accuracy. So a small
$\sin\angle(\mathcal{V},\bff)$ is a lucky event.

We can use this theorem to further illustrate why it is bad to solve
$(\bfA-\sigma\bfI)\bfu=\bfy$ iteratively. For a fixed small
$\tilde{\varepsilon}$, (\ref{eps_teps_relation_original}) tells us that
$\varepsilon$ should become smaller as $\sin\angle(\bfy,\bfx)\rightarrow 0$
as the algorithms converge.
As a result, we have to solve inner linear systems with higher accuracy as
$\bfy$ becomes more accurate.
More generally, this is the case when $\left\|\bfB \bfy-\alpha \bfy\right\|$
is not small and typically of $O(\|\bfB\|)$. Therefore, for $\alpha=0$ and more
general $\alpha$, the resulting method and SIA type methods are similar
and no winner in theory. They are common in that they all require to solve inner
linear systems accurately for some steps and they are different in that
the former solves inner linear systems with poor accuracy initially and then
with increasing accuracy as the algorithm converges, while
the latter ones solve inner linear systems with high accuracy in some initial
outer iterations and then with decreasing accuracy as the algorithms converge.

Based on (\ref{eps_teps_relation_general}), it is natural for us to maximize
its upper bound with respect to $\alpha$ for a
fixed $\tilde{\varepsilon}$. This will make $\varepsilon$ is
as small as possible, so that we pay least
computational efforts to solve (\ref{ls_unified}).
This amounts to minimizing $\left\|\bfB \bfy-\alpha \bfy\right\|$.
As is well known, the optimal $\alpha$ is
\begin{equation}
\arg\min\limits_{\alpha\in\mathcal{C}}\left\|\bfB \bfy-\alpha \bfy\right\|
=\bfy^H\bfB \bfy,
\end{equation}
Such $\alpha=-\frac{\alpha_2}{\alpha_1}$ corresponds to the choice
$\alpha_1=-\frac{1}{\bfy^H\bfB \bfy}$
and $\alpha_2=1$ in (\ref{ls_unified}), exactly leading to
linear system (\ref{ls_jd_1}) in the JD method. Therefore,
in the sense of minimizing $\left\|\bfB \bfy-\alpha \bfy\right\|$, the JD method
is the best. If we take $\alpha=\frac{1}{\nu-\sigma}$, which is the
approximation to $\frac{1}{\lambda-\sigma}$ in SIRA, by letting
$\alpha_1=\sigma-\nu$ and $\alpha_2=1$, then (\ref{ls_unified}) becomes
\begin{eqnarray*}
(\bfA-\sigma \bfI)\bfu=(\bfA-\sigma \bfI)\bfy+(\sigma-\nu)\bfy=\bfr_S,
\end{eqnarray*}
which is exactly the linear system in the SIRA method. In each of JD and SIRA,
$\left\|\bfB \bfy-\alpha \bfy\right\|$ is the residual norm of an
approximate eigenpair $(\alpha,\bfy)$ of $\bfB$.

In what follows, we denote $\varepsilon$ by $\varepsilon_S$ and $\varepsilon_J$
in the SIRA and JD methods, respectively. To derive our final and key relationships
between $\varepsilon_S,\,\varepsilon_J$ and $\tilde{\varepsilon}$, we
need the following lemma, which is direct from Theorem 6.1
of \cite{jia2001analysis} and establishes a close and compact relationship between
$\sin\angle(\bfy,\bfx)$ and the residual norm $\left\|\bfB \bfy-\alpha \bfy\right\|$.

\begin{Lem}\label{lem:upper_bound_sin_y_x}
Suppose $\left(\frac{1}{\lambda-\sigma},\bfx\right)$ is a simple
desired eigenpair of $\bfB\in\mathcal{C}^{n\times n}$ and
let $(\bfx,\bfX_\perp)$ be unitary. Then
\begin{equation}
\left[\begin{array}{c}\bfx^H \\ \bfX_\perp^H \end{array}\right]\bfB
\left[\begin{array}{cc}\bfx & \bfX_\perp \end{array}\right]=
\left[\begin{array}{cc}\frac{1}{\lambda-\sigma} & \bfc^H \\
\mathbf{0} & \bfL\end{array}\right],
\end{equation}
where $\bfc^H=\bfx^H\bfB\bfX_\perp$ and $\bfL=\bfX_\perp^H\bfB\bfX_\perp$.
Let $(\alpha,\bfy)$ be an approximation
to $\left(\frac{1}{\lambda-\sigma},\bfx\right)$, assume that $\alpha$
is not an eigenvalue of $\bfL$ and define
\begin{equation}
\sep\left(\alpha,\bfL\right)=\|(\bfL-\alpha\bfI)^{-1}\|^{-1}>0.
\end{equation}
Then
\begin{equation}
\label{upper_bound_sin_y_x}\sin\angle(\bfy,\bfx)\leq\frac{\|\bfB\bfy-
\alpha\bfy\|}{\sep\left(\alpha,\bfL\right)}.
\end{equation}
\end{Lem}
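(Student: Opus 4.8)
The plan is to reduce everything to the orthogonal-complement component of $\bfy$ and to read off the residual in the unitary basis $(\bfx,\bfX_\perp)$. First I would verify the displayed block-triangular form, which is immediate: since $\bfB\bfx=\frac{1}{\lambda-\sigma}\bfx$ and the columns of $\bfX_\perp$ are orthogonal to $\bfx$, the $(2,1)$ block $\bfX_\perp^H\bfB\bfx=\frac{1}{\lambda-\sigma}\bfX_\perp^H\bfx=\mathbf{0}$ vanishes, the $(1,1)$ entry equals $\bfx^H\bfB\bfx=\frac{1}{\lambda-\sigma}$, and the remaining blocks are $\bfc^H=\bfx^H\bfB\bfX_\perp$ and $\bfL=\bfX_\perp^H\bfB\bfX_\perp$ by definition.

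Next I would expand $\bfy$ in the basis $(\bfx,\bfX_\perp)$ as $\bfy=(\bfx^H\bfy)\bfx+\bfX_\perp\bfw$ with $\bfw=\bfX_\perp^H\bfy$, so that, because $\|\bfy\|=1$, definition (\ref{sinedef}) gives $\sin\angle(\bfy,\bfx)=\|\bfX_\perp^H\bfy\|=\|\bfw\|$. The crux is then to project the residual $\bfr=\bfB\bfy-\alpha\bfy$ onto the orthogonal complement of $\bfx$ by applying $\bfX_\perp^H$: using the vanishing $(2,1)$ block $\bfX_\perp^H\bfB\bfx=\mathbf{0}$ together with $\bfX_\perp^H\bfB\bfX_\perp=\bfL$, the eigendirection contributes nothing and one is left with $\bfX_\perp^H\bfr=\bfL\bfw-\alpha\bfw=(\bfL-\alpha\bfI)\bfw$, which isolates $\bfw$ in terms of the residual.

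Since $\alpha$ is assumed not to be an eigenvalue of $\bfL$, the matrix $\bfL-\alpha\bfI$ is invertible, so I would solve $\bfw=(\bfL-\alpha\bfI)^{-1}\bfX_\perp^H\bfr$, take norms, and use submultiplicativity together with $\|\bfX_\perp^H\bfr\|\leq\|\bfr\|$ (the columns of $\bfX_\perp$ being orthonormal) to obtain $\|\bfw\|\leq\|(\bfL-\alpha\bfI)^{-1}\|\,\|\bfr\|$. Substituting $\sin\angle(\bfy,\bfx)=\|\bfw\|$ and the definition $\sep(\alpha,\bfL)=\|(\bfL-\alpha\bfI)^{-1}\|^{-1}$ then yields (\ref{upper_bound_sin_y_x}) at once.

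This is a short residual bound with essentially no hard step; the only points deserving care are the normalization $\|\bfy\|=1$, without which a factor $\|\bfy\|$ would reappear in the denominator, and the confirmation that the $(2,1)$ block genuinely vanishes, so that no cross term from the eigendirection $\bfx$ survives in $\bfX_\perp^H\bfB\bfy$ and the clean identity $\bfX_\perp^H\bfr=(\bfL-\alpha\bfI)\bfw$ holds. The remaining ingredients — invertibility of $\bfL-\alpha\bfI$, norm submultiplicativity, and $\|\bfX_\perp^H\bfr\|\leq\|\bfr\|$ — are routine, and the conclusion coincides with Theorem 6.1 of \cite{jia2001analysis} recast in the present notation.
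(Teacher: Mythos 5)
Your proof is correct. The paper itself gives no argument for this lemma --- it simply invokes Theorem~6.1 of \cite{jia2001analysis} --- and what you have written is precisely the standard derivation behind that cited result: block-triangularize $\bfB$ in the unitary basis $(\bfx,\bfX_\perp)$, apply $\bfX_\perp^H$ to the residual so the eigendirection drops out via the zero $(2,1)$ block, and invert $\bfL-\alpha\bfI$. Your remark about the normalization is also well taken: as stated the bound implicitly assumes $\|\bfy\|=1$ (otherwise the right-hand side acquires a factor $1/\|\bfy\|$), which holds in the paper's setting since $\bfy=\bfV\bfz$ with $\bfV$ orthonormal and $\bfz$ a unit eigenvector of $\bfH$.
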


Combining (\ref{upper_bound_sin_y_x}) with Theorem~\ref{thm4}, we obtain one of our
main results.

\begin{Thm}\label{thm5}
Assume that $\alpha$ is an approximation to $\frac{1}{\lambda-\sigma}$
and is not an eigenvalue of $\bfL$. Then
\begin{equation}
\label{upper_bound_eps}\varepsilon\leq\frac{2\|\bfB \|}
{\sep\left(\alpha,\bfL\right)\sin\angle(\mathcal{V},\bff)}\tilde{\varepsilon}.
\end{equation}
In particular, for $\alpha=\frac{1}{\nu-\sigma}$ and
$\alpha=\bfy^H\bfB\bfy$, which correspond to
the SIRA and JD methods, respectively, assume that each of them is not an
eigenvalue of $\bfL$. Then it holds that
\begin{equation}
\label{eps_s}\varepsilon_S\leq\frac{2\|\bfB \|}{\sep\left(\frac{1}
{\nu-\sigma},\bfL\right)\sin\angle(\mathcal{V},\bff)}\tilde{\varepsilon},
\end{equation}
and
\begin{equation}
\label{eps_j}\varepsilon_J\leq\frac{2\|\bfB \|}
{\sep\left(\bfy^H\bfB\bfy,\bfL\right)\sin\angle(\mathcal{V},\bff)}
\tilde{\varepsilon}.
\end{equation}
\end{Thm}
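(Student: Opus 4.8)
The plan is to obtain (\ref{upper_bound_eps}) by feeding the eigenvector error bound of Lemma~\ref{lem:upper_bound_sin_y_x} into the relative-error estimate of Theorem~\ref{thm4}. The observation that drives everything is that the quantity $\|\bfB\bfy-\alpha\bfy\|$, which sits in the denominator of the bound in Theorem~\ref{thm4}, is exactly the numerator of the bound on $\sin\angle(\bfy,\bfx)$ supplied by Lemma~\ref{lem:upper_bound_sin_y_x}. So the two occurrences should cancel, trading the residual norm $\|\bfB\bfy-\alpha\bfy\|$ for the cleaner quantity $\sep(\alpha,\bfL)$.

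Concretely, I would first record from Theorem~\ref{thm4} that
\[
\varepsilon\leq\frac{2\|\bfB\|\sin\angle(\bfy,\bfx)}{\|\bfB\bfy-\alpha\bfy\|\sin\angle(\mathcal{V},\bff)}\tilde{\varepsilon},
\]
and then bound $\sin\angle(\bfy,\bfx)$ from above using (\ref{upper_bound_sin_y_x}). Since $\alpha$ is assumed not to be an eigenvalue of $\bfL$, the quantity $\sep(\alpha,\bfL)=\|(\bfL-\alpha\bfI)^{-1}\|^{-1}$ is positive and the lemma applies, giving $\sin\angle(\bfy,\bfx)\leq\|\bfB\bfy-\alpha\bfy\|/\sep(\alpha,\bfL)$. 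Substituting this into the numerator, the factor $\|\bfB\bfy-\alpha\bfy\|$ cancels against the same factor in the denominator, leaving precisely
\[
\varepsilon\leq\frac{2\|\bfB\|}{\sep(\alpha,\bfL)\sin\angle(\mathcal{V},\bff)}\tilde{\varepsilon},
\]
which is (\ref{upper_bound_eps}).

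The two special cases (\ref{eps_s}) and (\ref{eps_j}) then follow by instantiating $\alpha=-\alpha_2/\alpha_1$ with the values identified earlier: $\alpha_1=\sigma-\nu$, $\alpha_2=1$ gives $\alpha=\frac{1}{\nu-\sigma}$ for SIRA, while $\alpha_1=-\frac{1}{\bfy^H\bfB\bfy}$, $\alpha_2=1$ gives $\alpha=\bfy^H\bfB\bfy$ for JD; each is assumed not to be an eigenvalue of $\bfL$ so that the corresponding $\sep$ is well defined. There is essentially no obstacle beyond bookkeeping. The only point needing care is that the $\alpha$ appearing in Theorem~\ref{thm4} (defined there as $-\alpha_2/\alpha_1$) must be the same $\alpha$ that plays the role of the approximate eigenvalue in Lemma~\ref{lem:upper_bound_sin_y_x}, i.e.\ that $\|\bfB\bfy-\alpha\bfy\|$ is genuinely the residual norm of the approximate eigenpair $(\alpha,\bfy)$ of $\bfB$. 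This consistency was already established in the discussion preceding the theorem, so the substitution is legitimate and the chain of inequalities is valid.
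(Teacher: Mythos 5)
Your proposal is correct and follows exactly the route the paper intends: it combines the bound of Theorem~\ref{thm4} with inequality (\ref{upper_bound_sin_y_x}) of Lemma~\ref{lem:upper_bound_sin_y_x}, cancelling the residual norm $\|\bfB\bfy-\alpha\bfy\|$ to replace it by $\sep(\alpha,\bfL)$, and then specializes $\alpha$ to the SIRA and JD choices. This matches the paper's one-line derivation (``Combining (\ref{upper_bound_sin_y_x}) with Theorem~\ref{thm4}''), so there is nothing to add.
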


This theorem shows that once $\tilde\varepsilon$ is known we
can a-priori determine the accuracy requirements $\varepsilon_S$
and $\varepsilon_J$ on approximate solutions of inner linear
systems (\ref{lssira}) and (\ref{ls_jd}).

It is important to observe from (\ref{upper_bound_eps}) that
\begin{eqnarray*}
\varepsilon
\leq\frac{2\|\bfB \|}{\sep\left(\alpha,\bfL\right)\sin\angle(\mathcal{V},\bff)}
\tilde{\varepsilon}=\frac{2\|\bfB \|}{O(\|\bfB\|)}\tilde{\varepsilon}
=O(\tilde{\varepsilon})
\end{eqnarray*}
if $\alpha$ is well separated from the eigenvalues of $\bfB$ other than
$\frac{1}{\lambda-\sigma}$ and $\bfB$ is normal or mildly non-normal
and $\sin\angle(\mathcal{V},\bff)$
is not small. For $\sin\angle(\mathcal{V},\bff)$ small, noting that bound
(\ref{upper_bound_eps}) is compact,
we are lucky to have a bigger $\varepsilon$, i.e., to solve the inner
linear system with less accuracy. If $\sep\left(\alpha,\bfL\right)$
is considerably smaller than $\|\bfB\|$, then $\varepsilon$
may be bigger than $\tilde{\varepsilon}$ considerably and we are
likely lucky to solve the inner linear system with less accuracy.

For the $\alpha$'s in the SIRA and JD methods, by continuity the corresponding
two $\sep\left(\alpha,\bfL\right)$'s are close. Therefore, for
a given $\tilde{\varepsilon}$,
we have essentially the same upper bounds for $\varepsilon_S$ and
$\varepsilon_J$. This means that we need to solve the corresponding
inner linear systems (\ref{lssira}) and (\ref{ls_jd}) in the SIRA and
JD methods with essentially the same accuracy $\varepsilon$.
In other words, the SIRA and JD methods behave very similar when
(\ref{lssira}) and (\ref{ls_jd}) are solved with the same accuracy.

\section{Subspace improvement and selection of
$\tilde{\varepsilon}$ and $\varepsilon$}\label{sec:teps}

In this section, we first focus on the fundamental problem of how to
select $\tilde{\varepsilon}$
to make the inexact SIRA and JD mimic the exact SIRA very well from
the current step to the next one. Then we show how to achieve our
ultimate goal: the determination of $\varepsilon$.

Recall that the subspace expansion vectors are $\bfv$ and
$\tilde{\bfv}$ for the exact SIRA and the inexact SIRA or JD; see
(\ref{tv_and_v}). Define $\bfV_+=\left[\begin{array}{cc}
\bfV & \bfv \end{array}\right]$,
$\mathcal{V}_+=span\left\{\bfV_+\right\}$ and $\tilde{\bfV}_+
=\left[\begin{array}{cc}\bfV&\tilde{\bfv}\end{array}\right]$,
$\tilde{\mathcal{V}}_+=span\{\tilde{\bfV}_+\}$.
In order to make the inexact SIRA method mimic the exact SIRA method very
well, we must require that the two expanded subspaces $\mathcal{V}_+$ and
$\tilde{\mathcal{V}}_+$ have almost the same quality, namely,
$\sin\angle(\tilde{\mathcal{V}}_+,\bfx)\approx\sin\angle(\mathcal{V}_+,\bfx)$,
whose quantitative meaning will be clear later.

\begin{Thm}\label{thm:sin_v_xp_div_sin_tv_xp}
With the notations above, assume
$\sin\angle(\bfv,\xp)\not=0$ with $\bfx_\perp=(\bfI-\bfP_\bfV)\bfx$.\footnote
{If it fails to hold,
it is seen from (\ref{sin_v_xp}) that $\sin\angle(\mathcal{V}_+,\bfx)=0$
and the exact SIRA, SIA and JD methods terminate prematurely if
$\dim(\mathcal{V}_+)<n$. In this case,
$\mathcal{V}_+$ is an invariant subspace of $\bfA$
and we stop subspace expansion.
We will exclude this rare case.}
Then we have
\begin{eqnarray}
\label{sin_v_xp} \sin\angle(\mathcal{V}_+,\bfx)&=&
\sin\angle(\mathcal{V},\bfx) \sin\angle(\bfv,\xp),\\
\label{sin_v_xp_div_sin_tv_xp}
\frac{\sin\angle(\tilde{\mathcal{V}}_+,\bfx)}{\sin\angle(\mathcal{V}_+,\bfx)}
&=&\frac{\sin\angle(\tilde{\bfv},\xp)}{\sin\angle(\bfv,\xp)}.
\end{eqnarray}
Suppose $\angle(\tilde\bfv,\bfv)$ is acute. If
$\tau=\frac{2\tilde{\varepsilon}}{\sin\angle(\bfv,\xp)}<1$, we have
\begin{equation}
1-\tau\leq \label{sin_tV+_x_div_sin_V+_x}
\frac{\sin\angle(\tilde{\mathcal{V}}_+,\bfx)}{\sin\angle(\mathcal{V}_+,\bfx)}
\leq 1+\tau. \label{tau}
\end{equation}
\end{Thm}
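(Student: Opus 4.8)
The plan is to prove the three claims in sequence, since each feeds into the next. First I would establish the product formula (\ref{sin_v_xp}). The key observation is that $\mathcal{V}_+ = \mathcal{V} \oplus \mathrm{span}\{\bfv\}$ is an orthogonal direct sum, because $\bfv$ is obtained by orthonormalizing $(\bfI-\bfP_\bfV)\tilde\bfu$ (or $\bfu$) against $\bfV$, so $\bfv \perp \mathcal{V}$ and $\|\bfv\|=1$. Hence the orthogonal projector onto $\mathcal{V}_+$ splits as $\bfP_{\bfV_+} = \bfP_\bfV + \bfv\bfv^H$. Applying this to $\bfx$ and using $\sin\angle(\mathcal{V}_+,\bfx) = \|(\bfI-\bfP_{\bfV_+})\bfx\|$, I would write $(\bfI-\bfP_{\bfV_+})\bfx = (\bfI-\bfP_\bfV)\bfx - \bfv(\bfv^H\bfx) = \xp - \bfv(\bfv^H\bfx)$. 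Since $\bfv\in\mathcal{V}_+$ and $\bfv\perp\mathcal{V}$, the vector $\bfv$ lies in the space spanned by $\xp$'s component orthogonal to $\mathcal{V}$; in fact $\bfv^H\bfx = \bfv^H\xp$ because $\bfv\perp\mathcal{V}$. So the residual is exactly the component of $\xp$ orthogonal to $\bfv$, giving $\|(\bfI-\bfP_{\bfV_+})\bfx\| = \|\xp\|\,\sin\angle(\bfv,\xp) = \sin\angle(\mathcal{V},\bfx)\sin\angle(\bfv,\xp)$, which is (\ref{sin_v_xp}).

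The ratio formula (\ref{sin_v_xp_div_sin_tv_xp}) then follows immediately: the identical argument applied to $\tilde\bfv$ yields $\sin\angle(\tilde{\mathcal{V}}_+,\bfx) = \sin\angle(\mathcal{V},\bfx)\sin\angle(\tilde\bfv,\xp)$, and dividing the two expressions cancels the common factor $\sin\angle(\mathcal{V},\bfx)$. The only subtlety to note is that $\tilde\bfv\perp\mathcal{V}$ as well, so the same splitting of the projector is valid for $\tilde{\mathcal{V}}_+$; I would state this explicitly.

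For the two-sided bound (\ref{sin_tV+_x_div_sin_V+_x}), the plan is to control $\sin\angle(\tilde\bfv,\xp)$ in terms of $\sin\angle(\bfv,\xp)$ using the perturbation angle $\sin\angle(\tilde\bfv,\bfv)$. The natural tool is a triangle-type inequality for angles between unit vectors: since $\angle(\tilde\bfv,\xp)$ and $\angle(\bfv,\xp)$ differ by at most $\angle(\tilde\bfv,\bfv)$, one gets $|\sin\angle(\tilde\bfv,\xp) - \sin\angle(\bfv,\xp)| \le \sin\angle(\tilde\bfv,\bfv)$ (bounding the sine of the angle difference by the angle, or more carefully via $|\bfv^H\xp - \tilde\bfv^H\xp|\le\|\bfv-\tilde\bfv\|$ together with the relation between chord length and $\sin\angle(\tilde\bfv,\bfv)$). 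Dividing by $\sin\angle(\bfv,\xp)$ and invoking (\ref{sin_v_xp_div_sin_tv_xp}) gives
\begin{equation*}
\left|\frac{\sin\angle(\tilde{\mathcal{V}}_+,\bfx)}{\sin\angle(\mathcal{V}_+,\bfx)} - 1\right| \le \frac{\sin\angle(\tilde\bfv,\bfv)}{\sin\angle(\bfv,\xp)}.
\end{equation*}
The final step is to bound the numerator: from Lemma~\ref{lemma1}, $\sin\angle(\tilde\bfv,\bfv) = \tilde\varepsilon\sin\angle(\tilde\bfv,\bff_\perp) \le \tilde\varepsilon$, but this alone yields $\tau = \tilde\varepsilon/\sin\angle(\bfv,\xp)$ rather than the stated $2\tilde\varepsilon/\sin\angle(\bfv,\xp)$.

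The main obstacle is reconciling this factor of $2$. I expect the cleaner route is to bypass the angle-difference inequality and instead work directly with $\tilde\varepsilon$ via (\ref{IPtu}) and (\ref{tv_and_v}): write $\tilde\bfv$ and $\bfv$ as normalizations of $(\bfI-\bfP_\bfV)\tilde\bfu$ and $(\bfI-\bfP_\bfV)\bfu$, whose difference has norm $\tilde\varepsilon\|(\bfI-\bfP_\bfV)\bfu\|$. Projecting onto the direction $\xp$ and normalizing introduces a denominator $\|(\bfI-\bfP_\bfV)\tilde\bfu\|$, and controlling the deviation of this normalizing factor from $\|(\bfI-\bfP_\bfV)\bfu\|$ — which is within a relative factor $(1\pm\tilde\varepsilon)$ — is precisely what converts the bound from $\tilde\varepsilon$ to (to leading order, and then cleanly under $\tau<1$) the symmetric interval $[1-\tau,1+\tau]$ with $\tau=2\tilde\varepsilon/\sin\angle(\bfv,\xp)$. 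Getting the constant exactly right, and verifying that the acuteness hypothesis on $\angle(\tilde\bfv,\bfv)$ is what guarantees the lower bound $1-\tau$ stays meaningful (rather than a spurious sign), is the delicate part of the argument.
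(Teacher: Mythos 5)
Your treatment of (\ref{sin_v_xp}) and (\ref{sin_v_xp_div_sin_tv_xp}) is correct and is essentially the paper's own argument: the paper derives $\sin\angle(\mathcal{V}_+,\bfx)/\sin\angle(\mathcal{V},\bfx)=\sqrt{1-\cos^2\angle(\bfv,\xp)}$ from the Pythagorean identity $\sin^2\angle(\mathcal{V},\bfx)-\sin^2\angle(\mathcal{V}_+,\bfx)=|\bfv^H\bfx|^2$, which is the same computation as your explicit projector splitting $\bfP_{\bfV_+}=\bfP_\bfV+\bfv\bfv^H$, and the ratio formula follows by the identical argument for $\tilde\bfv$.

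The gap is in the last part, and the factor of $2$ you treat as an obstacle is not one. First, on logic: if your sharper estimate $|\sin\angle(\tilde\bfv,\xp)-\sin\angle(\bfv,\xp)|\leq\sin\angle(\tilde\bfv,\bfv)$ were in hand, it would give $\bigl|\sin\angle(\tilde{\mathcal{V}}_+,\bfx)/\sin\angle(\mathcal{V}_+,\bfx)-1\bigr|\leq\tilde\varepsilon/\sin\angle(\bfv,\xp)=\tau/2\leq\tau$, and the stated two-sided bound follows a fortiori; a stronger intermediate bound never needs to be ``reconciled'' with a weaker conclusion. Second, on rigor: neither of your sketched justifications actually delivers that estimate. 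Bounding the sine difference ``by the angle'' yields $|\sin a-\sin b|\leq|a-b|\leq\angle(\tilde\bfv,\bfv)$, a bound by the angle rather than by its sine (and $\theta\geq\sin\theta$, so this points the wrong way), while the chord estimate $|\bfv^H\xp-\tilde\bfv^H\xp|\leq\|\bfv-\tilde\bfv\|\|\xp\|$ controls a difference of cosines, not of sines. The paper closes this step with the sum-to-product identity $\sin a-\sin b=2\cos\frac{a+b}{2}\sin\frac{a-b}{2}$, the angle triangle inequality $|a-b|\leq\angle(\tilde\bfv,\bfv)$, and monotonicity of $\sin$ on the first quadrant (this is where the acuteness hypothesis is used), obtaining $|\sin a-\sin b|\leq 2\sin\frac{\angle(\tilde\bfv,\bfv)}{2}\leq 2\sin\angle(\tilde\bfv,\bfv)\leq 2\tilde\varepsilon$ by Lemma~\ref{lemma1}; that is exactly where the $2$ in $\tau$ comes from. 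Your proposed detour through the normalization factors $\|(\bfI-\bfP_\bfV)\tilde\bfu\|$ versus $\|(\bfI-\bfP_\bfV)\bfu\|$ is unnecessary and is left unexecuted, so as written the proof of (\ref{tau}) is incomplete, even though every missing piece is easily supplied.
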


\begin{proof}
Since
\begin{eqnarray*}
\sin^2\angle(\mathcal{V},\bfx)-\sin^2\angle(\mathcal{V}_+,\bfx)=
\|(\bfI-\bfP_\bfV)\bfx\|^2-\|(\bfI-\bfP_{\bfV_+})\bfx\|^2=|\bfv^H\bfx|^2,
\end{eqnarray*}
by $\|\xp\|=\sin\angle(\mathcal{V},\bfx)$ we obtain
\begin{eqnarray*}
\frac{\sin\angle(\mathcal{V}_+,\bfx)}{\sin\angle(\mathcal{V},\bfx)}
\nonumber&=&\sqrt{1-\left(\frac{|\bfv^H\bfx|}{\sin\angle(\mathcal{V},\bfx)}\right)^2}\\
\nonumber&=&\sqrt{1-\left(\frac{|\bfv^H\xp|}{\sin\angle(\mathcal{V},\bfx)}\right)^2}\\
\nonumber&=&\sqrt{1-\left(\frac{\|\xp\|\cos\angle(\bfv,\xp)}
{\sin\angle(\mathcal{V},\bfx)}\right)^2}\\
\nonumber&=&\sqrt{1-\cos^2\angle(\bfv,\xp)}\\
&=&\sin\angle(\bfv,\xp),
\end{eqnarray*}
which proves (\ref{sin_v_xp}). Similarly, we have
\begin{equation}
\label{sin_tv_xp}\frac{\sin\angle(\tilde{\mathcal{V}}_+,\bfx)}
{\sin\angle(\mathcal{V},\bfx)}
=\sin\angle(\tilde{\bfv},\xp).
\end{equation}
Hence, from (\ref{sin_v_xp}) and (\ref{sin_tv_xp}), we get (\ref{sin_v_xp_div_sin_tv_xp}).

Exploiting the trigonometric identity
$$
\sin\angle(\tilde\bfv,\bfx_{\perp})-\sin\angle(\bfv,\bfx_{\perp})
=2\cos\frac{\angle(\tilde\bfv,\bfx_{\perp})+\angle(\bfv,\bfx_{\perp})}{2}
\sin\frac{\angle(\tilde\bfv,\bfx_{\perp})-\angle(\bfv,\bfx_{\perp})}{2},
$$
the angle triangle inequality
$$
|\angle(\tilde\bfv,\bfx_{\perp})-\angle(\bfv,\bfx_{\perp})|\leq\angle(\tilde{\bfv},\bfv).
$$
and the monotonic increasing property of the $\sin$ function in the first quadrant,
we get
\begin{eqnarray}
|\sin\angle(\tilde\bfv,\bfx_{\perp})-\sin\angle(\bfv,\bfx_{\perp})|
&\leq&2\left|\sin\frac{\angle(\tilde\bfv,\bfx_{\perp})-\angle(\bfv,\bfx_{\perp})}{2}\right|
\nonumber\\
&=&2\sin\frac{|\angle(\tilde\bfv,\bfx_{\perp})-\angle(\bfv,\bfx_{\perp})|}{2}\nonumber\\
&\leq&2\sin\frac{\angle(\tilde{\bfv},\bfv)}{2}\nonumber\\
&\leq&2\sin\angle(\tilde{\bfv},\bfv). \label{triangle}
\end{eqnarray}

From (\ref{sin_v_xp_div_sin_tv_xp}), (\ref{triangle}) and (\ref{sin_tv_v_final}),
we obtain
\begin{eqnarray*}
\left|\frac{\sin\angle(\tilde{\mathcal{V}}_+,\bfx)}
{\sin\angle(\mathcal{V}_+,\bfx)}-1\right|
&=&\left|\frac{\sin\angle(\tilde{\bfv},\xp)}
{\sin\angle(\bfv,\xp)}-1\right|\\
&=&\frac{\left|\sin\angle(\tilde{\bfv},\xp)-
\sin\angle(\bfv,\xp)\right|}{\sin\angle(\bfv,\xp)}\\
&\leq&\frac{2\sin\angle(\tilde{\bfv},\bfv)}
{\sin\angle(\bfv,\xp)}\\
&\leq&\frac{2\tilde{\varepsilon}}{\sin\angle(\bfv,\xp)}=\tau,
\end{eqnarray*}
from which it follows that (\ref{tau}) holds.
\end{proof}

From (\ref{sin_v_xp}), we see that $\sin\angle(\bfv,\xp)$
is exactly one step subspace improvement when
$\mathcal{V}$ is expanded to $\mathcal{V}_+$.

(\ref{tau}) shows that, to make $\sin\angle(\tilde{\mathcal{V}}_+,\bfx)
\approx\sin\angle(\mathcal{V}_+,\bfx)$, $\tau$ should
be small. Meanwhile, (\ref{tau}) also indicates that a very small $\tau$ cannot
improve the bounds essentially. Actually, for our purpose,
a fairly small $\tau$, e.g., $\tau=0.01$, is enough since we have
$$
0.99\leq\frac{\sin\angle(\tilde{\mathcal{V}}_+,\bfx)}
{\sin\angle(\mathcal{V}_+,\bfx)}\leq 1.01
$$
and the lower and upper bounds are very near and differ
marginally. Therefore, $\tilde{\mathcal{V}}_+$ and $\mathcal{V}_+$
are of almost the same quality for approximating $\bfx$.
As a result, it is expected that the inexact SIRA or JD computes new
approximation over $\tilde{\mathcal{V}}_+$ to the desired
$(\lambda,\bfx)$ that has almost the same accuracy as that obtained by
the exact SIRA over $\mathcal{V}_+$. More precisely, the accuracy of
the approximate eigenpair by the exact SIRA and that
by the inexact SIRA or JD are generally the same within roughly
a multiple $c\in [1-\tau,1+\tau]$
(this assertion can be justified from the results
in \cite{jia05,jia2001analysis}). So how near the constant $c$ is to one is
insignificant, the inexact SIRA and JD generally mimic the exact SIRA
very well when $\tau$ is fairly small. Concisely, we may well draw the conclusion that
$\tau=0.01$ makes the inexact SIRA mimic the exact
SIRA very well, that is, the exact and inexact SIRA methods use almost
the same outer iterations to achieve the convergence.

Next we discuss the selection of $\tilde\varepsilon$. Once
$\tilde\varepsilon$ is available, in principle we can exploit compact bounds
(\ref{eps_s}) and (\ref{eps_j}) to determine the accuracy
requirements $\varepsilon_S$ and $\varepsilon_J$ on inner iterations
in the SIRA and JD.

From the definition of $\tau$, we have
\begin{equation}
\label{teps}\tilde{\varepsilon}=\frac{\tau}{2}\sin\angle(\bfv,\xp).
\end{equation}
As Theorem~\ref{thm:sin_v_xp_div_sin_tv_xp} requires $\tau<1$,
we must have $\tilde{\varepsilon}<\frac{1}{2}\sin\angle(\bfv,\bfx_{\perp})$.
But $\xp$ is not available and a-priori, so we can only use a reasonable
estimate on $\sin\angle(\bfv,\xp)$ in (\ref{teps}). In the following,
we will look into $\sin\angle(\bfv,\xp)$
and show that it is actually independent of the quality of the approximate
eigenvector $\bfy$, i.e., $\sin\angle(\bfy,\bfx)$, and the subspace quality, i.e.,
$\sin\angle({\cal V},\bfx)$. This means that $\sin\angle(\bfv,\xp)$ stays around
some constant during outer iterations. Then we analyze its size, which is shown
to be problem dependent and stay around some certain constant during outer
iterations. Based on these results, we can propose a general practical selection
of $\tilde\varepsilon$. Obviously, in order to achieve a given $\tau$,
the smaller $\sin\angle(\bfv,\xp)$ is, the smaller $\tilde{\varepsilon}$
must be and the more accurately we need to solve the inner linear system.

We now investigate $|\cos\angle(\bfv,\bfx_\perp)|$ and show that
it is bounded independently of $\sin\angle(\bfy,\bfx)$ and
$\sin\angle({\cal V},\bfx)$, so is $\sin\angle(\bfv,\bfx_\perp)$.
From (\ref{expand}) and (\ref{tv_and_v}), it is known
that $\bfv$ and $(\bfI-\bfP_\bfV)\bfB\bfy$
are in the same direction. Therefore, from decomposition (\ref{decompose_y})
of $\bfy$, we have
\begin{eqnarray*}
|\cos\angle(\bfv,\bfx_\perp)|
&=&\frac{|\bfx_\perp^H(\bfI-\bfP_\bfV)\bfB \bfy|}
{\|\bfx_\perp\|\|(\bfI-\bfP_\bfV)\bfB \bfy\|}\\
&=&\frac{\left|\bfx_\perp^H(\bfI-\bfP_\bfV)\bfB
(\cos\angle(\bfy,\bfx)\bfx+\sin\angle(\bfy,\bfx)\bfg)\right|}
{\|\bfx_\perp\|\|(\bfI-\bfP_\bfV)\bfB \bfy\|}\\
&=&\frac{\left|\bfx_\perp^H(\bfI-\bfP_\bfV)\left(\frac{\cos\angle(\bfy,\bfx)}
{\lambda-\sigma}\bfx+\sin\angle(\bfy,\bfx)\bfB \bfg\right)\right|}
{\|\bfx_\perp\|\|(\bfI-\bfP_\bfV)\bfB \bfy\|}\\
&=&\frac{\left|\cos\angle(\bfy,\bfx)\|\bfx_\perp\|^2+(\lambda-\sigma)
\sin\angle(\bfy,\bfx)\bfx_\perp^H\bfB \bfg\right|}{|\lambda-\sigma|\|
\bfx_\perp\|\|(\bfI-\bfP_\bfV)\bfB \bfy\|}\\
&\leq&\frac{|\cos\angle(\bfy,\bfx)|\|\bfx_\perp\|}{|\lambda-\sigma|
\|(\bfI-\bfP_\bfV)\bfB \bfy\|}+\frac{\sin\angle(\bfy,\bfx)|
\bfx_\perp^H\bfB \bfg|}{\|\bfx_\perp\|\|(\bfI-\bfP_\bfV)\bfB \bfy\|}.
\end{eqnarray*}
Note that $|\bfx_\perp^H\bfB \bfg|\leq\|\bfx_\perp\|\|\bfB \bfg\|
\leq\|\bfx_\perp\|\|\bfB\|$ and
$\|\bfx_\perp\|=\sin\angle(\mathcal{V},\bfx)\leq\sin\angle(\bfy,\bfx)$.
So
\begin{eqnarray}
|\cos\angle(\bfv,\bfx_\perp)|
\nonumber&\leq&\frac{|\cos\angle(\bfy,\bfx)|\|\xp\|}{|\lambda-\sigma|
\|(\bfI-\bfP_\bfV)\bfB \bfy\|}+\frac{\sin\angle(\bfy,\bfx)
\|\bfB \bfg\|}{\|(\bfI-\bfP_\bfV)\bfB \bfy\|}\\
\nonumber&\leq&\left(\frac{|\cos\angle(\bfy,\bfx)|}
{|\lambda-\sigma|}+\|\bfB\|\right)\frac{\sin\angle(\bfy,\bfx)}
{\|(\bfI-\bfP_\bfV)\bfB \bfy\|}\\
\label{cosvxp}&\leq&\frac{2\|\bfB\|\sin\angle(\bfy,\bfx)}
{\|(\bfI-\bfP_\bfV)\bfB \bfy\|}.
\end{eqnarray}
Combining (\ref{cosvxp}) and (\ref{Bybound}), we have
\begin{eqnarray}
|\cos\angle(\bfv,\bfx_\perp)|\leq\frac{O(\|\bfB\|)\sin\angle(\bfy,\bfx)}
{O\left(\|\bfB\|\right)\sin\angle(\bfy,\bfx)}=O(1),
\end{eqnarray}
a seemingly trivial bound. However, the proof clearly shows that
our derivation is general and does not miss anything essential.
We are not able to make the bound essentially sharper and more elegant
as the inequalities used in the proof cannot be sharpened generally.
Nevertheless, this is enough for our purpose. A key implication is that
the bound is independent of $\sin\angle(\bfy,\bfx)$ and
$\sin\angle({\cal V},\bfx)$, so $|\cos\angle(\bfv,\bfx_\perp)|$ is
expected to be around some constant during outer iterations,
so is $\sin\angle(\bfv,\bfx_\perp)$.

It is possible to estimate $\sin\angle(\bfv,\xp)$ in some important
cases. For the starting vector $\bfv_1$, it is known that the exact SIRA, SIA
and JD methods work on the standard Krylov subspaces $\mathcal{V}=\mathcal{V}_m=
\mathcal{K}_m(\bfB,\bfv_1)$ and
$\mathcal{V}_+=\mathcal{V}_{m+1}=\mathcal{K}_{m+1}(\bfB,\bfv_1)$.
Here we have temporarily added iteration subscripts and assume that the current
iteration step is $m$. It is direct from (\ref{sin_v_xp_div_sin_tv_xp}) to get
\begin{equation}\label{sinprod}
\sin\angle(\mathcal{V}_{m+1},\bfx)=\sin\angle(\bfv_1,\bfx)\prod_{i=2}^{m+1}
\sin\angle(\bfv_i,\bfx_{i,\perp}),
\end{equation}
where the $\bfv_i$ are exact subspace expansion vectors and 
$\bfx_{i,\perp}=(\bfI-\bfP_{{\bfV_i}})\bfx$at steps $i=2,3,\ldots,m+1$.

For the Krylov subspaces ${\cal V}_m$ and ${\cal V}_{m+1}$, there have
been some estimates on $\sin\angle(\mathcal{V}_{m+1},\bfx)$
in \cite{jia95,jia98,saad1992eigenvalue}.  For $\bfB$ is diagonalizable,
suppose all the $\lambda_i,\ i=1,2,\ldots,n$ and $\sigma$ are real and
$\frac{1}{\lambda-\sigma}$ is also the algebraically largest
eigenvalue of $\bfB$, and define
$$
\eta=1+2\frac{\frac{1}{\lambda-\sigma}-\frac{1}{\lambda_2-\sigma}}
{\frac{1}{\lambda_2-\sigma}-\frac{1}{\lambda_n-\sigma}}
=1+2\frac{(\lambda_2-\lambda)(\lambda_n-\sigma)}
{(\lambda_n-\lambda_2)(\lambda-\sigma)}>1.
$$
Then it is shown in \cite{jia98,saad1992eigenvalue} that
$$
\sin\angle(\mathcal{V}_{m+1},\bfx)=\sin\angle(\bfv_1,\bfx)\prod_{i=2}^{m+1}
\sin\angle(\bfv_i,\bfx_{i,\perp})\leq C_{\bfv_1}\sin\angle(\bfv_1,\bfx)\left(\frac{1}
{\eta+\sqrt{\eta^2-1}}\right)^m,
$$
where $C_{\bfv_1}$ is a certain constant only depending on $\bfv_1$ and the conditioning
of the eigensystem of $\bfB$. So, ignoring the constant factor $C_{\bfv_1}$, we see
the product $\prod_{i=2}^{m+1}
\sin\angle(\bfv_i,\bfx_{i,\perp})$ converges to zero at least as rapidly as
$$
\left(\frac{1}
{\eta+\sqrt{\eta^2-1}}\right)^m.
$$
As we have argued, all the $\sin\angle(\bfv_i,\bfx_{i,\perp})$,
$i=2,3,\ldots,m+1$, stay around a certain constant.
So basically, each step subspace improvement $\sin\angle(\bfv_i,\bfx_{i,\perp}),
\ i=2,3,\ldots,m+1$, behaves like and is no more than the factor
$$
\frac{1}{\eta+\sqrt{\eta^2-1}},
$$
the average convergence factor for one step. Returning to our notation, we see
the size of $\sin\angle(\bfv,\xp)$ crucially
depends on the eigenvalue distribution. The better $\frac{1}{\lambda-\sigma}$ is
separated from the other eigenvalues of $\bfB$, the smaller
$\sin\angle(\bfv,\xp)$ is. Conversely,
if $\frac{1}{\lambda-\sigma}$ is poorly separated from the others,
$\sin\angle(\bfv,\xp)$ may be near to one. For more complicated complex
eigenvalues and/or $\sigma$, quantitative results are obtained for
$\sin\angle(\mathcal{V}_{m+1},\bfx)$ and
similar conclusions are drawn in \cite{jia95,jia98}.
However, we should point that these estimates
may be conservative and also only predict linear convergence. In practice,
a slightly superlinear convergence may occur sometimes, as has been observed in
\cite{leestewart07}.

For $\tau=0.01$, if $\sin\angle(\bfv,\xp)\in [0.02,0.2]$, then by
(\ref{teps}) we have $\tilde{\varepsilon}\in [10^{-4}, 10^{-3}]$.
Such $\sin\angle(\bfv,\xp)$ means that $\frac{1}{\lambda-\sigma}$ is well
separated from the other eigenvalues of $\bfB$ and the exact SIRA generally
converges fast. In practice, however, for a
given $\tilde{\varepsilon}$ we do not know the value of $\tau$ produced
by $\tilde{\varepsilon}$ as $\sin\angle(\bfv,\bfx_{\perp})$ and
its bound are not known.
For a given $\tilde{\varepsilon}$, if we are unlucky to get a $\tau$
not small like $0.01$, the inexact SIRA may use more outer iterations
than the exact SIRA.
Suppose we select $\tilde\varepsilon=\frac{10^{-3}}{2}$. Then if each
$\sin\angle(\bfv,\bfx_{\perp})=0.1$, we get
$\tau=0.01$. For this case,
we have a very good subspace $\mathcal{V}_m$ for $m=10$ since
$\sin(\mathcal{V}_{10},\bfx)\leq 10^{-9}$, so the exact
SIRA generally converges very fast! For a real-world problem, however,
one should not expect that $\frac{1}{\lambda-\sigma}$ is generally so well
separated from the other eigenvalues that the convergence can
be so rapid. Therefore, we generally expect that
$\tilde\varepsilon\in [10^{-4},10^{-3}]$ makes $\tau\leq 0.01$, so that
the inexact SIRA and JD mimic the exact SIRA very well.

Summarizing the above, we propose taking
\begin{equation}
\tilde{\varepsilon}\in [10^{-4}, 10^{-3}].\label{tildeepsilon}
\end{equation}

Our ultimate goal is to determine $\varepsilon_S$ and $\varepsilon_J$ for
the inexact SIRA and JD. Compact bounds (\ref{eps_s}) and (\ref{eps_j}) show
that they are generally of $O(\tilde{\varepsilon})$. However, it
is impossible to compute the bounds cheaply and accurately.
We will consider their practical estimates on $\varepsilon_S$ and $\varepsilon_J$
in Section~\ref{issue}, where
we demonstrate that these estimates are cheaply obtainable.


\section{Restarted algorithms and practical stopping criteria for inner
iterations}\label{issue}

Due to the storage requirement and computational cost,
Algorithms~\ref{alg:sira}--\ref{alg:jd} will be impractical for large steps
of outer iterations. To be practical, it is necessary to restart them
for difficult problems.
Let $\bf\bfM_{\max}$ be the maximum of outer iterations allowed. If the
basic SIRA and JD algorithms do not converge, then we simply update $\bfv_1$
and restart them. We call the resulting restarted algorithms
Algorithms~3--4, respectively.

In implementations, we adopt the following strategy to update $\bfv_1$.
For outer iteration steps $i=1,2,
\ldots,\bfM_{\max}$ during the current cycle, suppose $(\nu_1^{(i)},\bfy_1^{(i)})$ is
the candidate for approximating the desired eigenpair $(\lambda,x)$ of $\bfA$ at
the $i$-th outer iteration. Then we take
\begin{equation}\label{revector}
\bfv_1=\bfy=\arg\min_{i=1,2,\ldots,\bfM_{\max}}
\|(\bfA-\nu_1^{(i)} \bfI)\bfy_1^{(i)}\|
\end{equation}
as the updated starting vector in the next cycle.
Such a restarting strategy guarantees that
we use the {\em best} candidate Ritz vector in the sense
of (\ref{revector}) to restart the algorithms.

In what follows we consider some practical issues and design practical stopping
criteria for inner iterations in the (non-restarted and restarted) inexact SIRA
and JD algorithms.

Given $\tilde{\varepsilon}$, since $\bfL$ is not available, it is impossible
to compute ${\rm sep}(\frac{1}{\nu-\sigma},\bfL)$ and ${\rm sep}(\bfy^H\bfB\bfy,\bfL)$
in (\ref{eps_s}) and (\ref{eps_j}). Also, we cannot compute $\sin\angle(\cal V,\bff)$
in (\ref{eps_s}) and (\ref{eps_j}).
In practice, we simply replace the insignificant factor $\sin\angle(\cal V,\bff)$
by one, which makes $\varepsilon_S$ and $\varepsilon_J$ as small as possible, so that
the inexact SIRA and JD algorithms are the safest to mimic the exact SIRA.
We replace $\|\bfB\|$ by $\frac{1}{|\nu-\sigma|}$ in the inexact
SIRA and JD, respectively. For ${\rm sep}(\frac{1}{\nu-\sigma},\bfL)$,
we can exploit the spectrum information of $\bfH$ to estimate it. Let
$\nu_i,\,i=2,3,\ldots,m$ be the other eigenvalues (Ritz values) of $\bfH$
other than $\nu$. Then we use the estimate
\begin{equation}\label{sepL}
{\rm sep}\left(\frac{1}{\nu-\sigma},\bfL\right)\approx \min_{i=2,3,\ldots,m}
\left|\frac{1}{\nu-\sigma}-\frac{1}{\nu_i-\sigma}\right|.
\end{equation}
Note that it is very expensive to compute $\bfy^H\bfB\bfy$ but
$\bfy^H\bfB\bfy\approx\frac{1}{\nu-\sigma}$. So we simply use
$\frac{1}{\nu-\sigma}$ to estimate ${\rm sep}\left(\bfy^H\bfB\bfy,\bfL\right)$.
With these estimates and taking the equalities in compact bounds
(\ref{eps_s}) and (\ref{eps_j}), we get
\begin{equation}\label{epsilon}
\varepsilon_S=\varepsilon_J=\varepsilon
= 2\tilde{\varepsilon}\max\limits_{i=2,3,\ldots,m}
\left|\frac{\nu_i-\sigma}{\nu_i-\nu}\right|.
\end{equation}
It might be possible to have $\varepsilon\geq 1$ for a given
$\tilde\varepsilon$. This would make $\tilde\bfu$ no accuracy
as an approximation to $\bfu$. As a remedy, from now on we set
\begin{equation}\label{epsmin}
\varepsilon=\min\{\varepsilon,0.1\}.
\end{equation}
For $m=1$, we simply set $\varepsilon=\tilde\varepsilon$.

Note that $\frac{\|\tilde{\bfu}-\bfu\|}{\|\bfu\|}$ is a-priori and
uncomputable. We are not able to determine whether it is below
$\varepsilon$ or not. However, it is easy to verify that
\begin{equation}
\frac{1}{\kappa(\bfB)}\frac{\|\tilde{\bfu}-\bfu\|}{\|\bfu\|}\leq
\frac{\|\bfr_S-(\bfA-\sigma \bfI)\tilde{\bfu}\|}{\|\bfr_S\|}\leq\kappa(\bfB)
\frac{\|\tilde{\bfu}-\bfu\|}{\|\bfu\|} \label{prioris}
\end{equation}
and
\begin{equation}
\frac{1}{\kappa(\bfB')}\frac{\|\tilde{\bfu}-\bfu\|}{\|\bfu\|}\leq
\frac{\|-\bfr_J-(\bfI-\bfy\bfy^H)(\bfA-\sigma \bfI)(\bfI-\bfy\bfy^H)
\tilde{\bfu}\|}{\|\bfr_J\|}\leq\kappa(\bfB')
\frac{\|\tilde{\bfu}-\bfu\|}{\|\bfu\|}, \label{priorij}
\end{equation}
where $\tilde\bfu\perp\bfy$ and
$\bfB'=\bfB|_{\bfy^{\perp}}=(\bfA-\sigma\bfI)^{-1}|_{\bfy^{\perp}}$,
the restriction of $\bfB$ to the orthogonal complement of $span\{\bfy\}$.
Alternatively, based on the above two relations,
in practice we require that inner solves stop when the a-posteriori computable
relative residual norms
\begin{equation}
\label{stopcrit}\frac{\|\bfr_S-(\bfA-\sigma \bfI)\tilde{\bfu}\|}{\|\bfr_S\|}
\leq\varepsilon
\end{equation}
and
\begin{equation}
\label{stopcritjd}\frac{\|-\bfr_J-(\bfI-\bfy\bfy^H)(\bfA-\sigma \bfI)(\bfI-\bfy\bfy^H)
\tilde{\bfu}\|}{\|\bfr_J\|}
\leq\varepsilon
\end{equation}
for the inexact SIRA and JD, respectively.
\smallskip

{\em Remark.} In
\cite{spence2009ia,simoncini2005ia,simoncini2003ia},
a-priori accuracy requirements have been determined for inner iterations
in SIA type methods.
In computation,  a-posteriori residuals are intuitive,
and are probably the only practical way to approximate the a-priori residuals.
Here, by the above lower and upper bounds (\ref{prioris}) and (\ref{priorij})
that relate the a-posteriori relative residuals to the a-priori errors of
approximate solutions, we have simply demonstrated that
(\ref{stopcrit}) and (\ref{stopcritjd}) are reasonable stopping criteria
for inner solves.
We see that the a-priori errors and the a-posteriori errors
are definitely comparable once the linear systems are not ill conditioned.

\section{Numerical experiments}\label{numer}

We report numerical experiments to confirm our theory.
Our aims are mainly three-fold: (i) Regarding outer iterations, for
fairly small $\tilde\varepsilon=10^{-3}$ and $10^{-4}$,
the (non-restarted and restarted) inexact SIRA and JD
behave very like the (non-restarted and restarted) exact SIRA.
Even a bigger $\tilde\varepsilon=10^{-2}$ often works very well.
(ii) Regarding inner iterations and overall efficiency,
the inexact SIRA and JD algorithms are considerably more efficient
than the inexact SIA. (iii) SIRA and JD are similarly effective.

All the numerical experiments were performed on an Intel
(R) Core (TM)2 Quad CPU Q9400 $2.66$GHz with main memory 2 GB using
Matlab 7.8.0 with the machine precision $\epsilon_{\rm mach}=2.22\times
10^{-16}$ under the Microsoft Windows XP operating system.

At the $m$th step of the inexact SIRA or JD method,
we have $\bfH_m=\bfV_m^H\bfA\bfV_m$. Let $(\nu_i^{(m)},\bfz_i^{(m)}),\ i=1,2,\ldots,m$
be the eigenpairs of $\bfH_m$, which are ordered as
$$
|\nu_1^{(m)}-\sigma|<|\nu_2^{(m)}-\sigma|\leq\cdots\leq |\nu_m^{(m)}-\sigma|.
$$
We use the Ritz pair $(\nu_m,\bfy_m):=(\nu_1^{(m)},\bfV_m\bfz_1^{(m)})$
to approximate the desired eigenpair $(\lambda,x)$ of $\bfA$,
and the associated residual is $\bfr_m=\bfA\bfy_m-\nu_m\bfy_m$.

We stop the algorithms if
$$
\|\bfr_m\|\leq tol=\max\left\{\|\bfA\|_1,1\right\}\times10^{-10}.
$$
In the inexact SIRA and JD, we stop inner solves when (\ref{stopcrit}) and
(\ref{stopcritjd}) are satisfied, respectively,
and denote by SIRA($\tilde\varepsilon$) and JD($\tilde\varepsilon$)
the inexact SIRA and JD algorithms with the given parameter $\tilde\varepsilon$.
We use the following stopping criteria for inner iterations
in the exact SIRA and SIA algorithms and the inexact SIA algorithm.
\begin{itemize}
\item For the \textquotedblleft exact\textquotedblright\ SIRA algorithm,
we require the approximate solution $\tilde{\bfu}_{m+1}$ to satisfy
$$
\frac{\|\bfr_m-(\bfA-\sigma\bfI)\tilde{\bfu}_{m+1}\|}{\|\bfr_m\|}\leq10^{-14}.
$$
\item For the inexact SIA algorithm, we take the same outer iteration tolerance
$tol=\max\left\{\|\bfA\|_1,1\right\}\times10^{-10}$,
and use the stopping criterion (3.14) in \cite{spence2009ia} for inner solve,
where $\varepsilon=tol$
and the steps $m$ suitably bigger than the number
of outer iterations used by the exact SIRA so as to ensure the
convergence of the inexact SIA with the same accuracy. For the restarted
inexact SIA, we take $m$ the maximum outer iterations $\bfM_{\max}$ allowed
for each cycle.
\end{itemize}

In the numerical experiments, we always take the zero vector as
an initial approximate solution to each inner linear system and solve it
by the right-preconditioned GMRES(30) method. Outer iterations
start with the normalized vector $\frac{1}{\sqrt{n}}(1,1,\ldots,1)^H$.
For the correction equation in the JD method, we use
$$
\tilde{\bfM}_m=(\bfI-\bfy_m\bfy_m^H)\bfM(\bfI-\bfy_m\bfy_m^H),
$$
the restriction of $M$ to the orthogonal complement of $span\{\bfy_m\}$,
as a preconditioner, which is suggested in \cite{vandervorst2002eigenvalue}.
$\tilde{\bfM}_m^{-1}|_{\bfy_m^{\perp}}$
means the inverse of $\tilde{\bfM}_m$ restricted to the orthogonal
complement of $span\{\bfy_m\}$.
Here $\bfM\approx\bfA-\sigma\bfI$ is some preconditioner used for all the
inner linear systems involved in the algorithms tested except JD.
We use the Matlab function $[L,U]=ilu(A-sigma*speye(n),setup)$
to compute the sparse incomplete LU factorization of $A-\sigma I$
with a given dropping tolerance $setup.droptol$. We then take $M=LU$.
van der Vorst \cite{vandervorst2002eigenvalue} shows
how to use $\tilde{\bfM}_m$ as a left preconditioner for (\ref{ls_jd}).
It can also be used a right preconditioner for (\ref{ls_jd}) in the same spirit.
Adapted from \cite[p. 137-8]{vandervorst2002eigenvalue}, we briefly describe
how to do so. Suppose that a Krylov solver for (\ref{ls_jd}) with
right-preconditioning starts with zero vector
as an initial guess to the solution. Then the starting vector for the Krylov solver
is $\bfr_m$, which is in
the subspace orthogonal to $\bfy_m$, and all iteration vectors for the
Krylov solver are in that subspace. We compute
$\tilde{\bfM}_m^{-1}|_{\bfy_m^{\perp}}\bfw$ for a vector $\bfw$ supplied
by the Krylov solver at each inner iteration.
Let $\bfz=\tilde{\bfM}_m^{-1}|_{\bfy_m^{\perp}}\bfw$ and note that $\bfz\perp\bfy_m$.
Then it follows that
$$
\bfw=\tilde{\bfM}_m\bfz=(\bfI-\bfy_m\bfy_m^H)\bfM\bfz=\bfM\bfz-\beta\bfy_m,
$$
where $\beta=\bfy_m^H\bfM\bfz$. Equivalently,
$\bfz=\bfM^{-1}\bfw+\beta\bfM^{-1}\bfy_m$.
Again, using $\bfz\perp\bfy_m$, we have
$\bfy_m^H\bfM^{-1}\bfw+\beta\bfy_m^H\bfM^{-1}\bfy_m=0$,
i.e., $\beta=-\frac{\bfy_m^H\bfM^{-1}\bfw}{\bfy_m^H\bfM^{-1}\bfy_m}$.
Therefore, we can compute $\tilde{\bfM}_m^{-1}|_{\bfy_m^{\perp}}\bfw$ by
$$
\tilde{\bfM}_m^{-1}\bfw
=\bfM^{-1}\bfw-\left(\frac{\bfy_m^H\bfM^{-1}\bfw}{\bfy_m^H\bfM^{-1}\bfy_m}\right)
\bfM^{-1}\bfy_m.
$$

In all the tables below, we denote by $I_{out}$ the number of outer iterations
to achieve the convergence, by $I_{inn}$ the total number of inner iterations,
i.e., the products of the matrix $A$ by vectors used by the Krylov solver,
by $I_{0.1}$ the times of $\varepsilon=0.1$,
by $T_1$ the total CPU time of solving the small eigenproblems,
by $T_2$ the total CPU time of generating the orthonormal basis $\bfV$ and forming
the projection matrix $\bfH$, by $T_3$ the time of constructing the preconditioner
and by $T_4$ the total CPU time of the Krylov solver for solving right-preconditioned
inner linear systems. We point out that the (inexact and exact) SIRA and JD methods
must form the projection matrices explicitly while SIA does not and it gives
its projection matrix as a byproduct when generating the orthonormal basis of
$\bfV$. As a result, for the same dimension of subspace,
$T_2$ for SIA is smaller than that for SIRA and JD. This will be
confirmed clearly in later numerical experiments, and we will not mention
this observation later.
For Examples 1--3 we test Algorithms~\ref{alg:sira}--\ref{alg:jd},
the inexact SIA and exact SIRA; for Example 4 we test these algorithms and
the restarted Algorithms~3--4 as well as the restarted inexact SIA.

\smallskip

\noindent\textbf{Example 1.} This problem is a large nonsymmetric standard
eigenvalue problem of cry10000 of $n=10000$ that arises from the stability
analysis of a crystal growth problem from \cite{matrixmarket}.
We are interested in the eigenvalue nearest to $\sigma=7$.
The computed eigenvalue is $\lambda\approx6.7741$.
The preconditioner $\bfM$ is obtained by the sparse
incomplete LU factorization of $\bfA-\sigma \bfI$ with $setup.droptol=0.001$.
Table~\ref{tab_cry10000} reports the results obtained, and the left
and right parts of Figure~\ref{fig_cry10000} depict the convergence curve
of $\frac{\|\bfr_m\|}{\|\bfA\|_1}$ versus $I_{out}$ and the curve of $I_{inn}$
versus $I_{out}$ for the algorithms, respectively.

\begin{figure}[!htb]
\begin{minipage}{7.5cm}
\includegraphics[height=5.8cm]{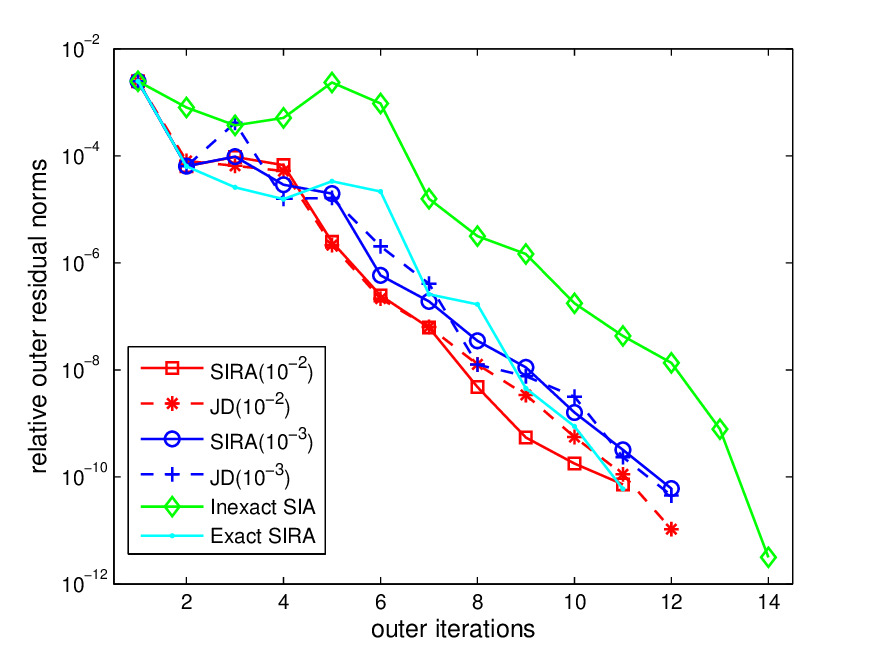}
\end{minipage}
\begin{minipage}{7.5cm}
\includegraphics[height=5.8cm]{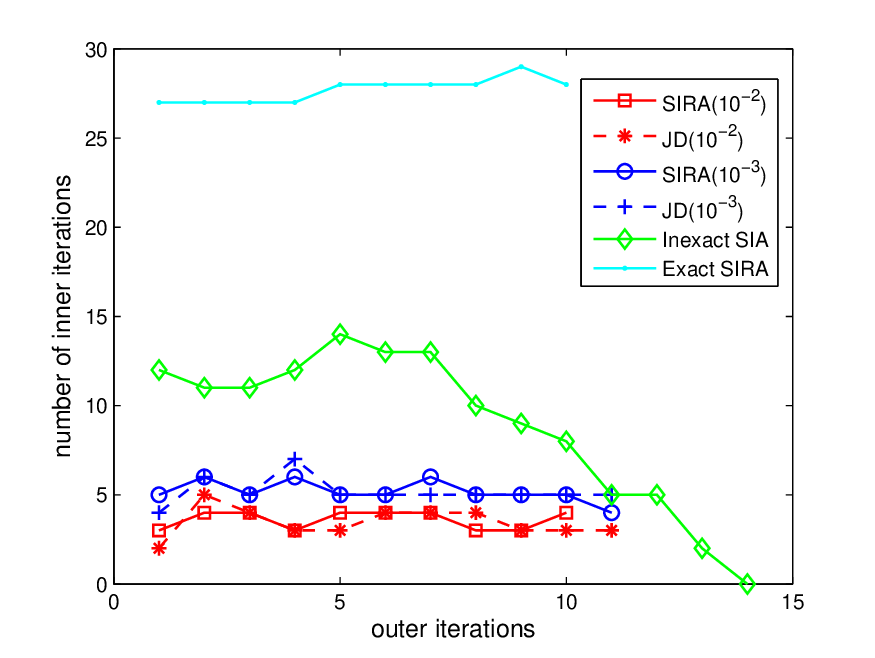}
\end{minipage}
\caption{\emph{Example 1. cry10000 with $\sigma=7$.
Left: relative outer residual norms versus outer iterations.
Right: the numbers of inner iterations versus outer iterations.}}
\label{fig_cry10000}
\end{figure}

\begin{table}[!htb]
\begin{center}
\begin{tabular}{rrrrrrrrrr}
\hline
Algorithm && $I_{inn}$ & $I_{out}$ & $I_{0.1}$ && $T_1$ & $T_2$ & $T_3$ & $T_4$ \\
\hline\hline
SIRA($10^{-2}$) && $36$  & $11$ & $0$ && $1$ & $18$ & $121$ & $81$ \\
JD($10^{-2}$)   && $38$  & $12$ & $0$ && $2$ & $21$ & $121$ & $103$\\ \hline
SIRA($10^{-3}$) && $57$  & $12$ & $0$ && $2$ & $21$ & $121$ & $120$ \\
JD($10^{-3}$)   && $57$  & $12$ & $0$ && $2$ & $21$ & $121$ & $136$\\ \hline
SIRA($10^{-4}$) && $88$  & $13$ & $0$ && $2$ & $24$ & $121$ & $184$\\
JD($10^{-4}$)   && $78$  & $12$ & $0$ && $2$ & $21$ & $121$ & $176$\\ \hline
Inexact SIA     && $131$ & $14$ & $-$ && $2$ & $13$ & $121$ & $340$\\
\textquotedblleft Exact\textquotedblright\ SIRA
                && $277$ & $11$ & $-$ && $1$ & $18$ & $121$ & $1386$\\ \hline \hline
\end{tabular}
\caption{\emph{Example 1. cry10000 with $\sigma=7$ (The unit of $T_1\sim T_4$ is
$0.001$ second).}}
\label{tab_cry10000}
\end{center}
\end{table}

We see from Table~\ref{tab_cry10000} and Figure~\ref{fig_cry10000}
that for both $\tilde{\varepsilon}=10^{-2},10^{-3}$ the inexact SIRA and JD
behaved like the exact SIRA very much and used almost the same outer
iterations, while the inexact SIA had a small convergence delay.
Clearly, smaller $\tilde{\varepsilon}$ is not necessary as
it cannot reduce outer iterations anymore.

Regarding the overall efficiency, the exact SIRA was obviously the most
expensive, as $I_{inn}$ and the dominant CPU time $T_3,\ T_4$
indicated. It used $27\sim29$ inner iterations per outer iteration.
The inexact SIA was the second most expensive, in terms of the same measures.
For it, the numbers of inner iterations were comparable and between $11\sim14$
at each of the first $7$ outer iterations where the accuracy of approximate
eigenpairs was poor and the inner linear systems must be solved with high accuracy.
As the approximate eigenpairs started converging, the relaxation strategy
came into picture and the inner linear systems were solved with decreasing
accuracy, leading to fewer inner iterations at subsequent outer iterations.
Inner iterations used by the inexact SIA were only comparable to and finally
below those used by the inexact SIRA and JD in the last very few iterations.
In contrast, the figure indicates that, for the same $\tilde{\varepsilon}$,
the inexact SIRA and JD solved the linear systems with almost the same inner
iterations per outer iteration.
Because of this, the inexact SIRA and JD were much more efficient than
the inexact SIA and used much fewer inner iterations and computing time
than the latter. Both the $I_{inn}$ and the total computing time in
Table~\ref{tab_cry10000} show that they were roughly one and a half to three
times as fast as the inexact SIA, and SIRA and JD with $\tilde{\varepsilon}=10^{-2}$
were considerably more efficient than that with $\tilde{\varepsilon}=10^{-3}$,
$10^{-4}$. Finally, we observe that the inexact SIRA and JD were equally effective,
as indicated by the $I_{inn}$ and the computing time
used for each $\tilde{\varepsilon}$.

In addition, we see from Table~\ref{tab_cry10000} that
$T_3$ is comparable to and can be more than $T_4$ when inner linear systems
are solved with low accuracy, and it is less important for the
inexact SIA, where the accuracy of inner
inner iterations increases as outer iterations proceed, and especially
for the exact SIRA, where inner linear systems are required to
be solved exactly in finite precision
arithmetic.

\smallskip

\noindent\textbf{Example 2.} We consider the unsymmetric sparse matrix sherman5
of $n=3312$ that has been used in \cite{spence2009ia,simoncini2005ia}
for testing the relaxation theory with $\sigma=0$. The computed eigenvalues
is $\lambda\approx4.6925\times10^{-2}$. The preconditioner $\bfM$ is obtained
by the sparse incomplete LU factorization of $\bfA-\sigma \bfI$ with
$setup.droptol=0.001$.
Table~\ref{tab_cry10000} and Figure~\ref{fig_cry10000} describe
the results and convergence processes.

\begin{figure}[!htb]
\begin{minipage}{7.5cm}
\includegraphics[height=5.8cm]{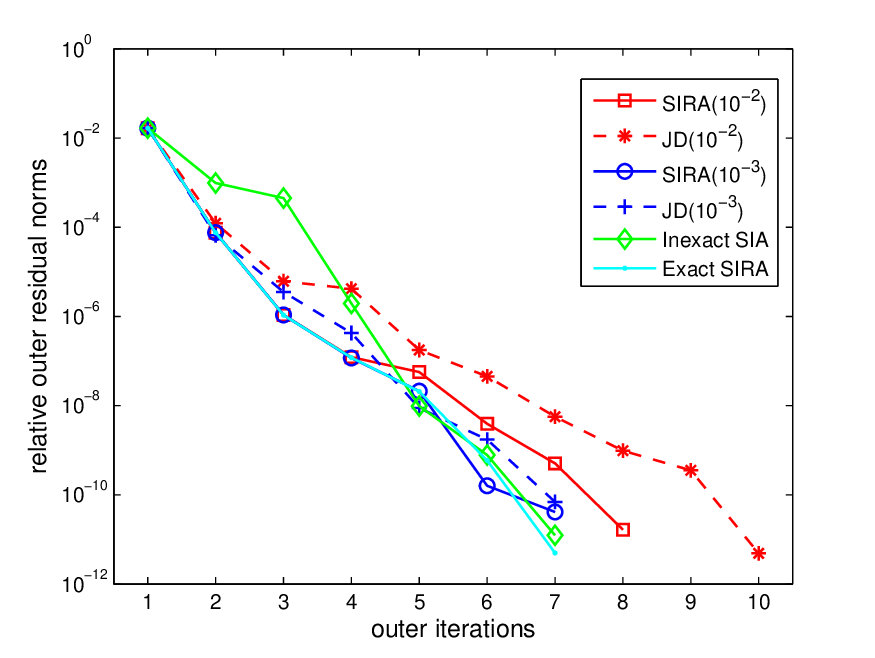}
\end{minipage}
\begin{minipage}{7.5cm}
\includegraphics[height=5.8cm]{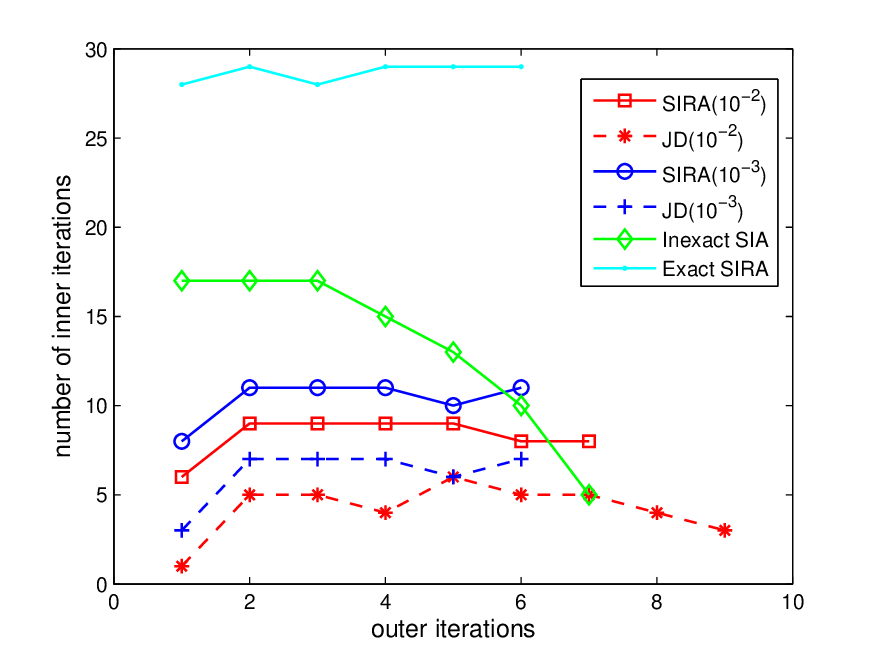}
\end{minipage}
\caption{\emph{Example 2. sherman5 with $\sigma=0$.
Left: relative outer residual norms versus outer iterations.
Right: the numbers of inner iterations versus outer iterations.}}
\label{fig_sherman5}
\end{figure}

\begin{table}[!htb]
\begin{center}
\begin{tabular}{rrrrrrrrrr}
\hline
Algorithm && $I_{inn}$ & $I_{out}$ & $I_{0.1}$ && $T_1$ & $T_2$ & $T_3$ & $T_4$ \\
\hline\hline
SIRA($10^{-2}$) && $58$  & $8$ & $0$ && $7$ & $32$ & $483$ & $1713$ \\
JD($10^{-2}$)   && $38$  & $10$& $0$ && $8$ & $44$ & $483$ & $1425$\\ \hline
SIRA($10^{-3}$) && $62$  & $7$ & $0$ && $5$ & $25$ & $483$ & $1820$ \\
JD($10^{-3}$)   && $37$  & $7$ & $0$ && $5$ & $24$ & $483$ & $1259$\\ \hline
SIRA($10^{-4}$) && $74$  & $7$ & $0$ && $5$ & $26$ & $483$ & $2174$\\
JD($10^{-4}$)   && $48$  & $7$ & $0$ && $5$ & $26$ & $483$ & $1567$\\ \hline
Inexact SIA     && $94$  & $7$ & $-$ && $4$ & $12$ & $483$ & $2821$\\
\textquotedblleft Exact\textquotedblright\ SIRA
                && $172$ & $7$ & $-$ && $6$ & $29$ & $484$ & $6583$\\ \hline \hline
\end{tabular}
\caption{\emph{Example 2. sherman5 with $\sigma=0$ (The unit of $T_1\sim T_4$ is
$0.0001$ second).}}
\label{tab_sherman5}
\end{center}
\end{table}

We see from the left part of Figure~\ref{fig_sherman5} that the inexact SIRA, JD
and SIA behaved like the exact SIRA very much and used very comparable outer
iterations. They mimic the exact SIRA better for $\tilde{\varepsilon}=10^{-3},10^{-4}$
than for $\tilde{\varepsilon}=10^{-2}$. The table also tells us that a smaller
$\tilde{\varepsilon}<10^{-3}$ is definitely not necessary as it could not reduce the
number of outer iterations and meanwhile consumed more inner iterations.
The results confirm our theory and indicate that our selection of
$\tilde{\varepsilon}$ and $\varepsilon$ worked very well.
It is obvious that, as far as outer iterations are concerned,
all the algorithms converged quickly and smoothly.

For the overall efficiency, the situation is very different.
As is expected, we see from Table~\ref{tab_sherman5} and Figure~\ref{fig_sherman5}
that the exact SIRA was the most expensive and
the inexact SIA with was the second most expensive, as the $I_{inn}$ and
the total computing time indicated.
The exact SIRA used $28\sim29$ inner iterations per outer iteration,
and the inexact SIA used $17$ inner iterations at each of the first $3$
outer iterations where the accuracy of approximate eigenpairs was poor
and the inner linear systems must be solved with high accuracy.
As the approximate eigenpairs started converging, the relaxation strategy
took effect and the inner linear systems were solved with decreasing accuracy,
so that the numbers of inner iterations became increasingly smaller
as outer iterations proceeded. In contrast, the inexact SIRA and JD were
much more efficient than the inexact SIA, they used much fewer inner iterations
and computing time than the latter
and were roughly one and a half to two times as fast as the inexact SIA.
Furthermore, we observe that the inexact JD and SIRA used quite
few and almost constant inner iterations per outer iteration for each
$\tilde{\varepsilon}$, respectively, but the former was more effective than
the latter. This may be due to the better conditioning of the
coefficient matrix in the correction equation of JD.

Also, we observe from Table~\ref{tab_sherman5}
that the time $T_4$ of solving preconditioned inner linear systems
dominates the total CPU time and on the other hand
the construction of preconditioners is the second most expensive.
So solving inner linear systems overwhelms is much more than the others,
and both $I_{inn}$ and the sum of $T_4$ and $T_3$ reflect
the overall efficiency of each algorithm very well.

\smallskip

\noindent\textbf{Example 3.} This problem arises from computational fluid dynamics
and the test matrix af23560 of $n=23560$ is from transient stability analysis of
Navier-Stokes solvers \cite{matrixmarket}. We want to find the eigenvalue nearest
to $\sigma=0$. The computed eigenvalue is $\lambda\approx-0.2731$.
The preconditioner $\bfM$ is obtained by the sparse
incomplete LU factorization of $\bfA-\sigma \bfI$ with $setup.droptol=0.01$;
see Table~\ref{tab_af23560} and Figure~\ref{fig_af23560} for the results.

\begin{figure}[!htb]
\begin{minipage}{7.5cm}
\includegraphics[height=5.8cm]{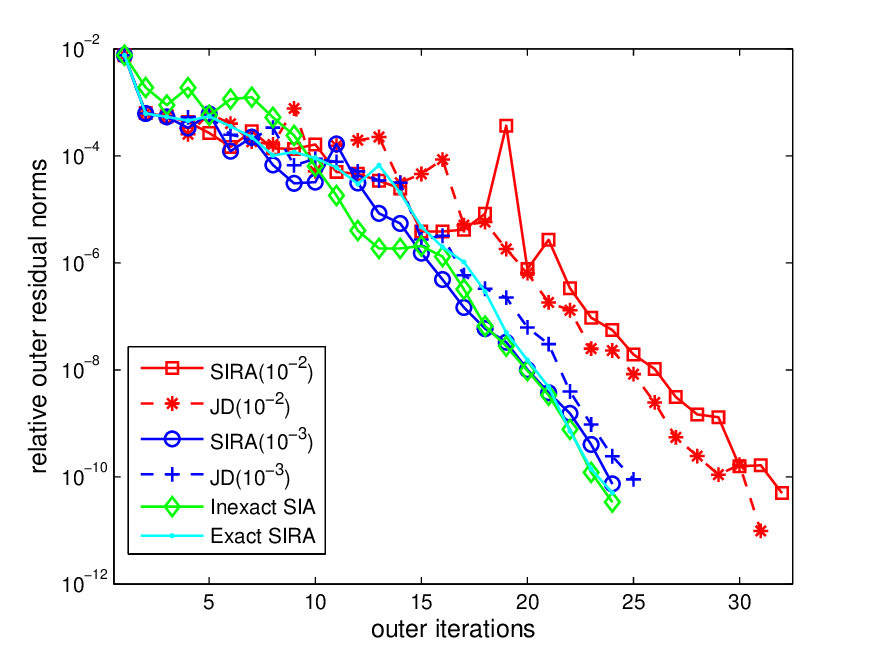}
\end{minipage}
\begin{minipage}{7.5cm}
\includegraphics[height=5.8cm]{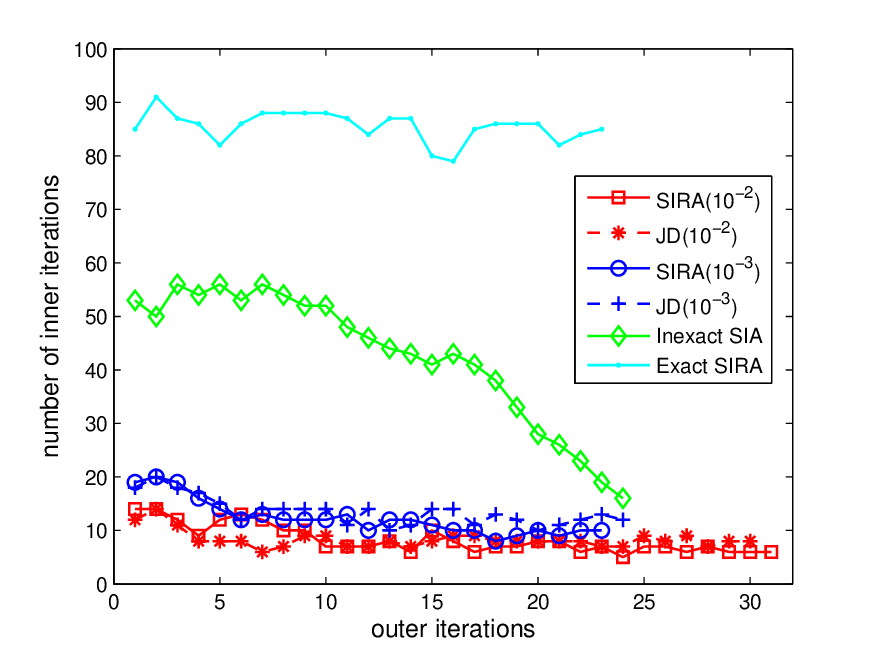}
\end{minipage}
\caption{\emph{Example 3. af23560 with $\sigma=0$.
Left: outer residual norms versus outer iterations.
Right: the numbers of inner iterations versus outer iterations.}}
\label{fig_af23560}
\end{figure}

\begin{table}[!htb]
\begin{center}
\begin{tabular}{rrrrrrrrrr}
\hline
Algorithm && $I_{inn}$ & $I_{out}$ & $I_{0.1}$ && $T_1$ & $T_2$ & $T_3$ & $T_4$ \\
\hline\hline
SIRA($10^{-2}$) && $258$  & $32$  & $19$ && $1$ & $68$ & $89$ & $1130$ \\
JD($10^{-2}$)   && $250$  & $31$  & $23$ && $1$ & $63$ & $89$ & $1140$\\ \hline
SIRA($10^{-3}$) && $283$  & $24$  & $0$ && $1$ & $37$ & $89$ & $1316$ \\
JD($10^{-3}$)   && $324$  & $25$  & $0$ && $1$ & $35$ & $89$ & $1519$\\ \hline
SIRA($10^{-4}$) && $429$  & $23$  & $0$ && $1$ & $35$ & $89$ & $2058$\\
JD($10^{-4}$)   && $400$  & $23$  & $0$ && $1$ & $32$ & $89$ & $1888$\\ \hline
Inexact SIA     && $1025$ & $24$  & $-$ && $1$ & $8$  & $89$ & $4232$\\
\textquotedblleft Exact\textquotedblright\ SIRA
                && $1967$ & $24$  & $-$ && $1$ & $31$ & $89$ & $8664$\\ \hline\hline
\end{tabular}
\caption{\emph{Example 3. af23560 with $\sigma=0$ (The unit of $T_1\sim T_4$ is
$0.01$ second).}}
\label{tab_af23560}
\end{center}
\end{table}

Compared with Examples 1--2, we see from both
Table~\ref{tab_af23560} and Figure~\ref{fig_af23560}
that for this problem all the algorithms used considerably more outer iterations
$I_{out}$ but $I_{inn}$ increases more rapidly than $I_{out}$ does. So this problem
was considerably more difficult than the previous two ones. The difficulty
is two-fold: the eigenvalue problem itself and the inner linear systems involved
in the algorithms. The second difficulty means that $T_4$ is more dominant
than it for Examples 1--2. Moreover,
we see that $T_4$ is much more than the corresponding $T_3$, the setup
time of the preconditioner. As as whole, $I_{inn}$ and the time of solving
inner linear systems reflect the overall efficiency of an algorithm more accurately.

In this example, the case that $\varepsilon=0.1$ occurred at about
$60\%$ and $75\%$ of outer iterations in SIRA($10^{-2}$) and JD($10^{-2}$),
respectively. Regarding outer iterations, we observe from
Figure~\ref{fig_af23560} that for $\tilde{\varepsilon}=10^{-3}$ the
inexact SIRA, JD and SIA behaved like
the exact SIRA very much. For the bigger $\tilde{\varepsilon}=10^{-2}$,
the inexact SIRA and SIA used more outer iterations and did not mimic
the exact SIRA well. It is amazing that SIRA($10^{-4}$) and JD($10^{-4}$) used
one less outer iteration than the exact SIRA.
Again, the results confirmed our theory and
showed that a low or modest accuracy $\tilde{\varepsilon}=10^{-3}$ is enough,
a looser $\tilde{\varepsilon}=10^{-2}$ worked quite well and only a little bit
more outer iterations were needed for it.

For the overall efficiency, the inexact SIA was better than the exact SIRA
but much inferior to the inexact SIRA and JD.
Actually, as $I_{inn}$ and $T_4$ show, the inexact SIRA and JD
with $\tilde{\varepsilon}=10^{-2},10^{-3}$
were twice to almost four times as fast as the inexact SIA.
Although SIRA($10^{-2}$) and JD($10^{-2}$) used more outer iterations
than the others, they were the most efficient
in terms of both $I_{inn}$ and $T_4$.
The exact SIRA used roughly $85$ inner iterations per outer iteration.
The inexact SIA used many inner iterations and needed to solve inner
linear systems with high accuracy for most of the outer iterations.
Even after the relaxation strategy played a role,
it still used much more inner iterations than the inexact SIRA and
JD with $\tilde{\varepsilon}=10^{-2},10^{-3}$ at each outer iteration.
Although SIRA($10^{-4}$) and JD($10^{-4}$) behaved like
the exact SIRA best and won all the others in terms of $I_{out}$,
the overall efficiency of them was not as good as that of the
the inexact methods with bigger $\tilde{\varepsilon}$.
We find that, for the same accuracy $\tilde{\varepsilon}$,
the inexact SIRA and JD solved the linear systems with slowly varying
inner iterations at each outer iteration. This is expected as
the accuracy requirements of inner iterations were almost
the same. In terms of $I_{inn}$ and $T_4$, we also observe
from Table~\ref{tab_af23560} that the inexact SIRA and JD were
equally effective and had very similar efficiency.

Still, similar to Examples 1--2, we see from $T_1\sim T_4$ that solving
preconditioned inner linear systems is the most expensive and dominates
the overall efficiency of each algorithm, while the construction
of preconditioners overwhelms the solutions of small eigensystems as
well as the generations of orthonormal basis and projected matrices.

\smallskip

\noindent\textbf{Example 4.} This unsymmetric eigenvalue problem dw8192 of $n=8192$
arises from dielectric channel waveguide problems \cite{matrixmarket}.
We are interested in the eigenvalue nearest to the complex target $\sigma=0.01\imag$.
The computed eigenvalue is
$\lambda\approx3.3552\times10^{-3}+1.1082\times10^{-3}\imag$
The preconditioner $\bfM$ is obtained by the sparse
incomplete LU factorization of $\bfA-\sigma \bfI$ with $setup.droptol=0.001$.
Table~\ref{tab_dw8192} displays the results.

\begin{table}[!htb]
\begin{center}
\begin{tabular}{rrrrrrrrrr}
\hline
Algorithm && $I_{inn}$ & $I_{out}$ & $I_{0.1}$ && $T_1$ & $T_2$ & $T_3$ & $T_4$ \\
\hline\hline
SIRA($10^{-2}$) && $312$  & $99$ & $82$ && $12$ & $53$ & $3$ & $129$ \\
JD($10^{-2}$)   && $276$  & $93$ & $81$ && $11$ & $47$ & $3$ & $144$\\ \hline
SIRA($10^{-3}$) && $386$  & $87$  & $0$ && $8$  & $41$ & $3$ & $144$ \\
JD($10^{-3}$)   && $428$  & $94$  & $1$ && $11$ & $47$ & $3$ & $192$\\ \hline
SIRA($10^{-4}$) && $466$  & $71$  & $0$ && $4$  & $26$ & $3$ & $171$\\
JD($10^{-4}$)   && $451$  & $70$  & $0$ && $4$  & $25$ & $3$ & $183$\\ \hline
Inexact SIA     && $1663$ & $86$  & $-$ && $7$  & $8$  & $3$ & $616$\\
\textquotedblleft Exact\textquotedblright\ SIRA
                && $1940$ & $66$  & $-$ && $3$  & $21$ & $3$  & $741$\\ \hline\hline
\end{tabular}
\caption{\emph{Example 4. dw8192 with $\sigma=0.01\imag$ (The unit of $T_1\sim T_4$ is
$0.1$ second).}}
\label{tab_dw8192}
\end{center}
\end{table}

As far as the eigenvalue problem is concerned,
Table~\ref{tab_dw8192} clearly indicates that this problem is much more difficult
than Examples 1--3 since all the algorithms used much more outer iterations
to achieve the convergence than those needed for Examples 1--3.
But our inexact SIRA and JD algorithms still worked very well.
The inexact SIRA and JD with $\tilde{\varepsilon}=10^{-4}$
behaved more like the exact SIRA than with $\tilde{\varepsilon}=10^{-3}$ and
$\tilde{\varepsilon}=10^{-2}$. Therefore, we can infer that a smaller
$\tilde{\varepsilon}<10^{-4}$ is not necessary and
cannot improve the behavior of the inexact SIRA and JDl; it will
make the inexact methods use almost the same
outer iterations as the exact SIRA but consume more inner iterations.
Furthermore, we have observed the inexact SIA did not mimic the exact
SIRA very well as it used considerably more outer iterations than the exact SIRA.

For the overall efficiency, Table~\ref{tab_dw8192} exhibited similar features to
those in all the previous tables for Examples 1--3.
The inexact SIRA and JD were similarly effective.
Both of them were much more efficient than the inexact SIA and actually
three to five times as fast as the latter, in terms of both $I_{inn}$ and
the total computing time.

Since this problem is difficult, we turn to use restarted SIRA and JD algorithms,
Algorithms 3--4, to solve it with the maximum $\bfM_{\max}=30$ outer iterations
allowed during each cycle. We also test the implicitly
restarted inexact SIA method \cite{spence2009ia,xueelman10} with
the same $\bfM_{\max}=30$ and make a comparison
of all the restarted algorithms.
Table~\ref{tab_dw8192_restart} lists the results obtained by the restarted
inexact SIRA, JD and SIA as well as the restarted exact SIRA,
where $I_{restart}$ denotes the number of restarts used, i.e.,
the number of the cycles of Algorithms 1--2 for the given $\bfM_{\max}$.
Figure~\ref{fig_dw8192_restart} depicts the convergence curve of all
the restarted algorithms and the curve of $I_{inn}$ versus $I_{restart}$,
in which the zeroth restart in abscissa denotes the first cycle of Algorithms 3--4 and
corresponds to the first restart in the left figure.

\begin{figure}[!htb]
\begin{minipage}{7.5cm}
\includegraphics[height=5.8cm]{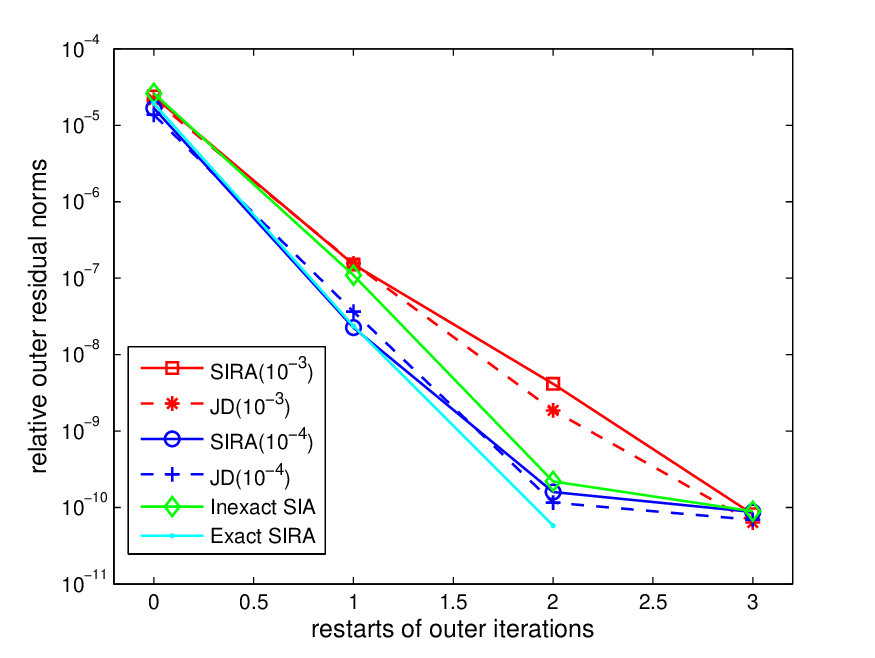}
\end{minipage}
\begin{minipage}{7.5cm}
\includegraphics[height=5.8cm]{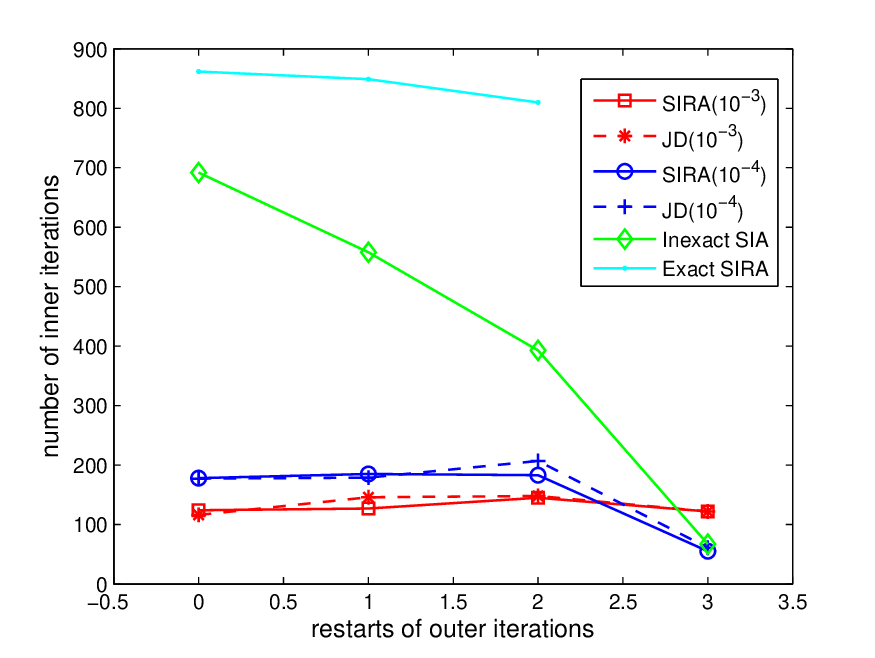}
\end{minipage}
\caption{\emph{Example 4. Restarted algorithms with $\bfM_{\max}=30$.
Left: outer residual norms versus outer iterations.
Right: the numbers of inner iterations versus restarts.}}
\label{fig_dw8192_restart}
\end{figure}
\begin{table}[!htb]
\begin{center}
\begin{tabular}{rrrrrrrrrrr}
\hline
Algorithm && $I_{inn}$ & $I_{restart}$ & $I_{out}$ & $I_{0.1}$ && $T_1$ & $T_2$ & $T_3$ & $T_4$ \\
\hline\hline
SIRA($10^{-2}$) && $876$  & $8$ & $250$ & $142$ && $2$ & $32$ & $3$ & $346$ \\
JD($10^{-2}$)   && $578$  & $5$ & $175$ & $98$  && $2$ & $23$ & $3$ & $284$\\ \hline
SIRA($10^{-3}$) && $518$  & $3$ & $115$ & $0$   && $1$ & $15$ & $3$ & $203$ \\
JD($10^{-3}$)   && $532$  & $3$ & $117$ & $1$   && $1$ & $15$ & $3$ & $234$\\ \hline
SIRA($10^{-4}$) && $601$  & $3$ & $100$ & $0$   && $1$ & $12$ & $3$ & $222$\\
JD($10^{-4}$)   && $624$  & $3$ & $98$  & $0$   && $1$ & $12$ & $3$ & $262$\\ \hline
Inexact SIA     && $1710$ & $3$ & $95$  & $-$   && $1$ & $3$  & $3$ & $602$\\
\textquotedblleft Exact\textquotedblright\ SIRA
                && $2521$ & $2$ & $89$  & $-$ && $1$ & $11$ & $3$  & $971$\\ \hline\hline
\end{tabular}
\caption{\emph{Example 4. Restarted algorithms with $\bfM_{\max}=30$ (The unit of $T_1\sim T_4$ is
$0.1$ second).}}
\label{tab_dw8192_restart}
\end{center}
\end{table}

It is seen from Table~\ref{tab_dw8192_restart} and the left part of
Figure~\ref{fig_dw8192_restart} that all the algorithms other than
SIRA($10^{-2}$) and JD($10^{-2}$)
solved the problem very successfully with no more than
three restarts used and the convergence processes were very smooth.
The restarted inexact SIA behaved like the restarted exact SIRA well
but not so well as the restarted SIRA and JD with $\tilde{\varepsilon}=10^{-4}$,
which behaved very like the restarted exact SIRA in the first two restarts
and almost converged to our prescribed convergence accuracy at the second restart.

We also find that, compared with Table~\ref{tab_dw8192_restart},
the restarted SIRA($10^{-4}$), JD($10^{-4}$) and exact SIRA performed excellently
since $I_{out}$'s used by them were very near to the ones used by their corresponding
non-restarted versions, respectively. For the restarted SIRA($10^{-2}$) and JD($10^{-2}$),
the case that $\varepsilon=0.1$ occurred at $50\%$ of outer iterations.
They did not mimic the exact SIRA well and used considerably more outer
iterations than the inexact SIRA and JD
with $\tilde{\varepsilon}=10^{-3}$ and
$\tilde{\varepsilon}=10^{-4}$. So $\tilde{\varepsilon}=10^{-2}$ is not a good choice
for the restarted inexact SIRA and JD for this example, though $I_{inn}$
and the total computing time are not so considerably more than those used
by the algorithms with $\tilde\varepsilon=10^{-3},\ 10^{-4}$.

Regarding the overall performance, for given $\tilde{\varepsilon}=10^{-3}$ and
$\tilde{\varepsilon}=10^{-4}$, the restarted SIRA and JD algorithms performed
very similarly and were about more than twice as fast as the restarted
inexact SIA, in terms of both $I_{inn}$ and the total computing
time (actually $T_4$ now). During the last cycle, the restarted inexact
SIRA($10^{-4}$) and JD($10^{-4}$) had already achieved the convergence at the
tenth and eighth outer iteration, respectively.
So we stopped the algorithm at that step and actually solved only about a third of
twenty-nine inner linear systems needed to solve in each of the previous cycles.
As a result, the number of inner iterations needed in the last circle was
also about a third of that needed in each of the first three cycles.
This is the reason why, in the right part of Figure~\ref{fig_dw8192_restart},
the curves for the restarted SIRA($10^{-4}$) and JD($10^{-4}$) had a drastic
decrease at last restart.
As is expected, the restarted inexact SIRA and JD algorithms used almost constant
inner iterations for the same $\tilde{\varepsilon}$ per restart,
while the inexact SIA used fewer and fewer inner iterations as outer iterations
converged. The figure clearly shows that the restarted inexact SIA used much more
inner iterations than the restarted SIRA($10^{-4}$) and JD($10^{-4}$) at each
of the first three cycles.

We see from Tables~\ref{tab_dw8192}--\ref{tab_dw8192_restart}
that for this example the dominant cost is still paid to the
solutions of preconditioned inner linear systems but unlike Examples 1--3 the
construction of preconditioners is very cheap and negligible, compared
with $T_4$.

In summary, it is seen from all the numerical experiments that both $I_inn$ and
$T_4$ are reasonable measures of overall performance of SIRA, JD and SIA algorithms.
\smallskip

We have tested some other problems.
We have also tested the algorithms when tuning is applied to our preconditioner
$\bfM$ \cite{spence2009ia}. All of them have shown that the inexact SIRA and JD
mimic the inexact SIA and the exact SIRA very well for
$\tilde{\varepsilon}=10^{-3},10^{-4}$ and use much fewer inner iterations than
the inexact SIA. As far as the overall efficiency is concerned,
SIRA($10^{-2}$) and JD($10^{-2}$) may work well and often use
comparably inner iterations than SIRA($10^{-3}$) and JD($10^{-3}$),
but they are likely to need considerably more outer iterations and cannot mimic
the exact SIRA well. Therefore, for the robust and general purpose,
we propose using
$\tilde{\varepsilon}\in[10^{-4},10^{-3}]$ in practice.
We have found that the tuned preconditioning has no advantage over
the usual preconditioning and is often inferior to the latter
for the linear systems involved in the inexact SIRA, JD and SIA algorithms.
For example, we have found that for Example 3 the tuned preconditioning used
about three times more inner iterations than the usual preconditioning.

\section{Conclusions and future work}\label{concl}

We have quantitatively analyzed the convergence of the SIRA and
and JD methods over one step and proved that one only needs to solve all the inner
linear systems involved in them with low or modest accuracy.
Based on the theory established, we have designed practical
stopping criteria for inner iterations of the inexact SIRA and JD. Numerical
experiments have illustrated that our theory works very well and
the non-restarted and restarted inexact SIRA and JD algorithms
behave very like the non-restarted and restarted
exact SIRA algorithms. Meanwhile, we have confirmed that the
inexact SIRA and JD algorithms are similarly effective and
both of them are much more efficient than the inexact SIA
algorithms.

It is known that the (inexact) JD method with variable shifts is used
more commonly. The analysis approach proposed in this paper may be
extended to analyze the accuracy requirements of inner iterations in
the JD method with variable shifts and a rigorous general
theory may be expected. This work is in progress.

Since the harmonic projection may be more suitable to solve the
interior eigenvalue problem, it is very significant
to consider the harmonic version of SIRA. Moreover, it is known that the
standard projection, i.e., the Rayleigh--Ritz method, and its harmonic version may
have convergence problem when computing eigenvectors \cite{jia2001analysis,jia05}.
So it is worthwhile and appealing to use the refined Rayleigh--Ritz procedure
\cite{jia97,jia2001analysis} and the refined
harmonic version \cite{jia2001analysis}
for solving the large eigenproblem considered in this paper.
These constitute our future work.
\bigskip

{\bf Acknowledgements}. We thank the two referees for their comments and suggestions.

\begin{small}
\bibliographystyle{siam}

\end{small}

\end{document}